%%
%% Copyright 2007, 2008, 2009 Elsevier Ltd
%%
%% This file is part of the 'Elsarticle Bundle'.
%% ---------------------------------------------
%%
%% It may be distributed under the conditions of the LaTeX Project Public
%% License, either version 1.2 of this license or (at your option) any
%% later version.  The latest version of this license is in
%%    http://www.latex-project.org/lppl.txt
%% and version 1.2 or later is part of all distributions of LaTeX
%% version 1999/12/01 or later.
%%
%% The list of all files belonging to the 'Elsarticle Bundle' is
%% given in the file `manifest.txt'.
%%

%% Template article for Elsevier's document class `elsarticle'
%% with numbered style bibliographic references
%% SP 2008/03/01
%%
%%
%%
%% $Id: elsarticle-template-num.tex 4 2009-10-24 08:22:58Z rishi $
%%
%%
%\documentclass[preprint,12pt]{elsarticle}

%% Use the option review to obtain double line spacing
 %\documentclass[preprint,review,12pt]{elsarticle}

%% Use the options 1p,twocolumn; 3p; 3p,twocolumn; 5p; or 5p,twocolumn
%% for a journal layout:
 %\documentclass[final,1p,times]{elsarticle}
%% \documentclass[final,1p,times,twocolumn]{elsarticle}
 \documentclass[final,3p,times]{elsarticle}
 % %\documentclass[final,3p,times,twocolumn]{elsarticle}
 %\documentclass[final,5p,times]{elsarticle}
 % %\documentclass[final,5p,times,twocolumn]{elsarticle}

%% if you use PostScript figures in your article
%% use the graphics package for simple commands
%% \usepackage{graphics}
%% or use the graphicx package for more complicated commands
%% \usepackage{graphicx}
%% or use the epsfig package if you prefer to use the old commands
 \usepackage{epsfig}

%% The amssymb package provides various useful mathematical symbols
\usepackage{amssymb}
%% The amsthm package provides extended theorem environments
 \usepackage{amsthm}
\usepackage{amsmath,amssymb,amsopn,amsfonts,mathrsfs,amsbsy,amscd}

\usepackage{longtable}
\usepackage{caption}
\usepackage{multirow}

%% The lineno packages adds line numbers. Start line numbering with
%% \begin{linenumbers}, end it with \end{linenumbers}. Or switch it on
%% for the whole article with \linenumbers after \end{frontmatter}.
 \usepackage{lineno}

%% natbib.sty is loaded by default. However, natbib options can be
%% provided with \biboptions{...} command. Following options are
%% valid:

%%   round  -  round parentheses are used (default)
%%   square -  square brackets are used   [option]
%%   curly  -  curly braces are used      {option}
%%   angle  -  angle brackets are used    <option>
%%   semicolon  -  multiple citations separated by semi-colon
%%   colon  - same as semicolon, an earlier confusion
%%   comma  -  separated by comma
%%   numbers-  selects numerical citations
%%   super  -  numerical citations as superscripts
%%   sort   -  sorts multiple citations according to order in ref. list
%%   sort&compress   -  like sort, but also compresses numerical citations
%%   compress - compresses without sorting
%%
%% \biboptions{comma,round}

 %\biboptions{}

%\journal{Journal of Differential Geometry and its applications}

\newcommand{\prs}{\langle\;,\;\rangle}
\newcommand{\too}{\longrightarrow}

\newcommand{\nil}{\mathrm{Nil} }
\newcommand{\sol}{\mathrm{Sol} }
\newcommand{\Ric}{\mathrm{Ric} }

\newcommand{\esp}{\quad\mbox{and}\quad}

\newcommand{\G}{{\mathfrak{g}}}
\newcommand{\g}{{\mathfrak{g}}}
\newcommand{\F}{{\cal F}}
\newcommand{\h}{{\mathfrak{h} }}
\newcommand{\n}{{\mathfrak{n} }}
\newcommand{\ad}{{\mathrm{ad}}}
\newcommand{\tr}{{\mathrm{tr}}}

\newcommand{\B}{{\cal B}}

\newcommand{\na}{\nabla}
\newcommand{\wi}{\widetilde}
\newcommand{\al}{\alpha}

\newcommand{\Ga}{\Gamma}
\def\Euc{\mathsf{Euc}}
\newcommand{\e}{\epsilon}

\newcommand{\ch}{\check{e}}
\newcommand{\la}{\lambda}

\font\bb=msbm10

\def\B{\hbox{\bb B}}
\def\R{\hbox{\bb R}}

\def\N{\hbox{\bb N}}

\def\C{\hbox{\bb C}}
\newtheorem{Def}{Definition}[section]
\newtheorem{theorem}{Theorem}[section]
\newtheorem{pr}{Proposition}[section]
\newtheorem{Le}{Lemma}[section]
\newtheorem{co}{Corollary}[section]
\newtheorem{remarks}{Remarks}[section]

\newtheorem{exem}{Example}
\newtheorem{remark}{Remark}

\def\SO{{\sf{SO}}}
 
\def\SL{{\sf{SL}}}

\def\Iso{{\sf{Iso}} }

\def\ad{{\sf{ad}}}

\setcounter{tocdepth}{1111}

\usepackage{hyperref}
\hypersetup{
	colorlinks=true,
	linkcolor=blue,
	filecolor=magenta,      
	urlcolor=cyan,
}

\begin{document}

\begin{frontmatter}

%% Title, authors and addresses

%% use the tnoteref command within \title for footnotes;
%% use the tnotetext command for the associated footnote;
%% use the fnref command within \author or \address for footnotes;
%% use the fntext command for the associated footnote;
%% use the corref command within \author for corresponding author footnotes;
%% use the cortext command for the associated footnote;
%% use the ead command for the email address,
%% and the form \ead[url] for the home page:
%%
 %\title{Left invariant para-K\"ahler and hyper-para-K\"ahler structures on Lie groups\tnoteref{label1}}
%% \tnotetext[label1]{}
 %\author{\corref{cor1}\fnref{label2}}
 
%% \ead{email address}
%% \ead[url]{home page}
%% \fntext[label2]{}
%% \cortext[cor1]{}
%% \address{Address\fnref{label3}}
 
 \title{Kundt Three Dimensional Left Invariant Spacetimes}

%% use optional labels to link authors explicitly to addresses:
 \author[label1]{ Aissa Meliani}
 \author[label2]{  Mohamed Boucetta }
 \author[label3]{ Abdelghani Zeghib }
 \address[label1]{Laboratory of Nonlinear Partial Differential Equations and History of Mathematics, 
 	B.P. 92, 16006-ENS Kouba, Algeria.
 	\\e-mail: Aissa.Meliani.math@gmail.com  
 }
 \address[label2]{Universit\'e Cadi-Ayyad\\
  Facult\'e des sciences et techniques\\
  BP 549 Marrakesh Morocco.\\e-mail: m.boucetta@uca.ma
  }
  
  \address[label3]{UMPA, CNRS, ENS de Lyon, France.
  	\\e-mail: abdelghani.zeghib@ens-lyon.fr  
  }

%\author{}

%\address{}

\begin{abstract} Kundt spacetimes are of great importance to General Relativity. We show that a Kundt spacetime is a Lorentz manifold  with a non-singular isotropic geodesic  vector field  
having its orthogonal distribution  integrable and determining  a totally geodesic foliation. We give the local structure of Kundt spacetimes and some properties of left invariant Kundt structures on Lie groups. Finally, we classify all  left invariant Kundt structures on three dimensional simply connected unimodular Lie groups.

\end{abstract}

% (i.e. such that $\na_VV=0$ ($\na$ is the Levi-Civita connection) 

\begin{keyword}Kundt spaces \sep   Lorentzian Lie groups 
%% keywords here, in the form: keyword \sep keyword
\MSC 53C50 \sep \MSC 53Z05 \sep \MSC 22E20

%% MSC codes here, in the form: \MSC code \sep code
%% or \MSC[2008] code \sep code (2000 is the default)

\end{keyword}

\end{frontmatter}

%%
%% Start line numbering here if you want
%%
% \linenumbers

%% main text

%% The Appendices part is started with the command \appendix;
%% appendix sections are then done as normal sections
%% \appendix

%% \section{}
%% \label{}

%% References
%%
%% Following citation commands can be used in the body text:
%% Usage of \cite is as follows:
%%   \cite{key}         ==>>  [#]
%%   \cite[chap. 2]{key} ==>> [#, chap. 2]
%%

%% References with bibTeX database:

% \tableofcontents
 %\textheight 19cm
 %\textwidth  14cm
 %\baselineskip 0.59cm

\section{Introduction}\label{section1}

Kundt spacetimes are of great importance to General Relativity, as well as alternative gravity theories. 
To begin with, let us say that Kundt spacetimes 
constitute a natural generalization of
 pp-wave spacetimes. 
%One major motivation of their study is that they consist a natural generalization of
%Their prototypical example are 
%pp-wave spacetimes. , but 
%many other exact solutions are in fact of Kundt type: pp-waves. 
%Many exact solutions, e.g. pp-wave spacetimes,  are of Kundt type. 
%One can in fact motivate their study, at a first glance, 
%One major motivation on their study lies in the fact that 
  %that they naturally and non-trivially generalize pp-waves. \\
  Roughly speaking, a Kundt spacetime is defined by the fact that it supports a vector 
  field  all of its scalar invariants vanish, without being Killing (see for instance \cite[Chapter 6]{SKMHH}).
  One of our motivations here is to provide a coordinate-free treatment of Kundt spacetimes, which is 
  hard to find in 
  the General Relativity literature. More precisely, let us define 
a Kundt spacetime  as a Lorentz  manifold $(M,g)$ having the following property (see  for instance \cite{Hervic}):
\begin{enumerate}\item[$(\mathrm{K}1)$] There exists
    a non-singular vector field $V$ on $M$ such that  
  \begin{equation}\label{eq1}  g(V,V)=g(\na_VV,\na_VV)=\tr(A^V)= g(B^V,B^V)=g(\mathrm{C}^V,\mathrm{C}^V)=0, \end{equation} where $\na$ is the Levi-Civita connection, $A^V: TM\too TM$ denotes the $(1,1)$ tensor field given by $A^V(X)=\na_XV$, $B$ its symmetric part, $C$ its skew-symmetric part
  and if $F:TM\too TM$ is a $(1,1)$-tensor field then $g(F,F)=\tr(F\circ F^*)$ where $F^*$ is its adjoint with respect to $g$.
  \end{enumerate}
  It turns out (see Proposition \ref{equiv}) that this property is equivalent to:
  \begin{enumerate}\item[$(\mathrm{K}2)$] There exists 
  	a non-singular vector field $V$ on $M$ and a differential 1-form $\al$ such that
  	\begin{equation}\label{eq2} g(V,V)=0, \na_XV=\al(X) V\esp \na_VV=0\end{equation}
  	for any vector field $X$ orthogonal to $V$.
  	
  	\end{enumerate}

\bigskip

A fundamental observation for us was (see Proposition \ref{equiv} and more details in Sections \ref{section2},  
\ref{Section Kundt groups} and 
\ref{Section Kundt Spacetimes vs Kundt Groups})  that  Kundt  properties $(\mathrm{K}1)$ or $(\mathrm{K}2)$ imply the following property:
\begin{enumerate}\item[$(\mathrm{LK})$] There exists on $M$ a codimension one  totally geodesic   foliation which is degenerate with respect to $g$. More precisely, there exists a codimension one foliation $\F$ such that each leaf $L$ of $\F$ is a lightlike (totally) geodesic hypersurface
(that is 
$T^\perp L \subset TL$ and any  geodesic $\gamma: [a, b] \to M$, $a < 0 <b$,  somewhere tangent to $L$,  is locally contained in $L$: if $\gamma^\prime (0) \in T_{\gamma(0)} L$ then there exists $\epsilon >0$ such that $\gamma ( [- \epsilon, + \epsilon]) \subset  L$).

	\end{enumerate}

	% We refer to a Lorentz manifold satisfying $(\mathrm{LK})$  as {\it local Kundt spacetime}.

	Actually, up to assuming the direction field $T^\perp \F$ orientable (which is always possible up to passing to a double covering), 
	the property $(\mathrm{LK})$ is equivalent to:

	\begin{enumerate}\item[$(\mathrm{LKbis})$] there exists  
		a non-singular vector field $V$ on $M$ and a differential 1-form $\al$ such that
		\begin{equation}\label{eq3} g(V,V)=0, \na_XV=\al(X) V\end{equation}
		for any vector field $X$ orthogonal to $V$.
		
	\end{enumerate}
	
  We will refer to a Lorentz manifold satisfying $(\mathrm{LK})$   as a {\it local Kundt spacetime}. Indeed, 
  $(\mathrm{LK})$	 implies $(\mathrm{K}1)$ locally near any point in $M$. In other words,  a spacetime 
	admitting a codimension one lightlike geodesic foliation is locally Kundt. 
	
	%satisfying $(\mathrm{LK})$ 
 
	% We refer to a Lorentz manifold satisfying $(\mathrm{LK})$ or $(\mathrm{LKbis})$ as {\it local Kundt spacetime}.

%On the other hand, the principal 

\bigskip

Another major motivation to study  Kundt Lorentz manifolds lies in their relation with 
CSI-spaces,   those having all of their scalar curvature invariants are constant.  The simplest 
one is the scalar curvature $\mathsf{Scal}_g$, but one can also consider eigenvalues of the Ricci operator
$\Ric_g$ or the $g$-norm of the Riemann tensor $\mathsf{Rm}_g$.  All those are scalar curvature invariants of order 1. Higher order ones
are obtained by considering covariant derivatives 
of $\mathsf{Rm}_g$.
So CSI means, in particular,  that all  these quantities are   constant functions on $M$.

Locally homogeneous spaces are CSI, and the 
existence  of CSI spaces that are 
not   locally-homogeneous 
spaces   is a non-Riemannian phenomena which makes  a one major difference between the positive and the non-definite cases in pseudo-Riemannian structures. A typical example is given by 
the  (conformally flat) plane wave metric
$g = dx^2 +dy^2 - 2dvdu - 2 f(u)(x^2 +y^2)du^2$, which is VSI (i.e. has vanishing scalar invariants) for any $f$, but locally homogeneous for 
only few $f$'s.  This example is Kundt, in fact $V = \frac{\partial}{\partial v}$ is a parallel vector field.

It is believed that a CSI Lorentz space, if it is not locally homogeneous, must be of Kundt type!  This conjecture has been  proved in some 
cases, e.g. in the lower dimensions 3 and 4, see for instance \cite{CHP1,  CHP2}.
In another direction, there is a notion of $\mathcal I$-degenerate metrics, meaning that they have non-trivial (i.e. non-isometric) 
deformations keeping all the scalar curvature invariant functions  the same (non-depending of the deformation parameter, but maybe 
depending on the point of $M$).  Those are believed to be Kundt too.

Not all locally homogeneous spacetimes are Kundt, neither all Kundt spacetimes are locally homogeneous, but 
it is worthwhile to consider locally homogeneous Kundt spaces as a special class of both the homogeneous and the Kundt categories! Our project is to study  Kundt structures on  three dimensional Lie groups $G$ endowed with a left invariant Lorentzian metric $g$.  It is natural, in this special homogeneous framework, to introduce a stronger Kundt property as follows. 	We call   $(G,g)$  a  {\it Kundt Lie group} (resp. {\it locally Kundt Lie group}) if it admits a non-singular left invariant vector field $V$  satisfying $(\mathrm{K}2)$ (resp. $(\mathrm{LKbis})$). In other words, we assume here compatibility between the
$(\mathrm{K}2)$ or the  $(\mathrm{LKbis})$ property and the algebraic structure of $G$.

%We distinguish two cases:
%\begin{enumerate} \item  We call $(G,g)$   {\it homogeneous Kundt  spacetime} (resp. {\it locally homogeneous Kundt spacetime}) if it admits a non-%singular vector field $V$ (not necessary left invariant) satisfying $(\mathrm{K}2)$ (resp. $(\mathrm{LKbis})$).
	
	%\item 
	
%	Wa call   $(G,g)$   {\it Kundt Lie group} (resp. {\it locally Kundt Lie group}) if it admits a non-singular left invariant vector field $V$  satisfying $
%(\mathrm{K}2)$ (resp. $(\mathrm{LKbis})$).
	
	%\end{enumerate}

%	\ghani{I do not understand why ``homogeneous Kundt spacetime...?''}
\subsection{Results}
	
	One of our principal results, Theorem \ref{main}, states,  essentially,  that a three dimensional Lorentz group which is Kundt as a spacetime, is  in fact a Kundt group.  We also classify, up to isometric isomorphism,  all unimodular three dimensional Kundt groups.
	% (Section \ref{section3}). 

%We show that if a three dimensional Lie group $(G,g)$ endowed with a left invariant Lorentzian metric is a locally homogeneous Kundt spacetime %then it is a locally Kundt Lie group (see Theorem \ref{main}) and .

The paper is organized as follows. In Section \ref{section2}, we provide a synthetic (coordinate-free) account on Kundt spacetimes emphasising on their relationship with lightlike geodesic foliations. We introduce Kundt groups and general facts about them in Section \ref{Section Kundt groups}. The proof of Theorem \ref{main} as well as further results are given in Section \ref{Section Kundt Spacetimes vs Kundt Groups}.
Section \ref{section3} contains the classification up to automorphism of Kundt Lorentz groups.
%, and finally 
%Section \ref{Kundt vs Locally Kundt Groups} contains comparison between Kundt and  locally-Kundt groups.

%more details about relationships between the variants of Kundt. 

%give some properties of Kundt spacetimes and we prove Theorem \ref{main}. In Section \ref{section3}, we give, up to an automorphism, all the %structures of  Kundt Lie groups on simply connected unimodular three dimensional Lie groups.

%\ghani{This subsection has to be re-written}

%\section{Some General Results}

\section{Kundt spacetimes and geodesic foliations}
\label{section2}

Recall from the introduction that a Kundt spacetime is a Lorentz manifold satisfying the property $(\mathrm{K}1)$. Let us prove that this property is equivalent to $(\mathrm{K2})$ and implies $(\mathrm{LK})$. Moreover, $(\mathrm{LK})$ implies $(\mathrm{K1})$ locally near any point of the Lorentz manifold.

\begin{pr}\label{equiv} Let $(M,g)$ be a Lorentz manifold. Consider the following assertions:
	\begin{enumerate}\item[$(i)$] $(M,g)$ is a Kundt spacetime.
		\item[$(ii)$] There exists on $M$ an isotropic non-singular   vector field $V$ and a differential 1-form $\alpha$ such that, for any $X\in\Ga(V^\perp)$,
		\[ \na_XV=\al(X)V\esp \na_VV=0. \]

		\item[$(iii)$] There exists on $M$ a totally geodesic codimension one foliation which is degenerate with respect to $g$. This means that there exists a vector sub-bundle $F\subset TM$ of rank $(\dim M)-1$, where the restriction of $g$ to $F$ is degenerate and, for any $X,Y\in\Ga(F)$, $\na_XY\in\Ga(F)$ where $\na$ is the Levi-Civita connection of $g$.

	\end{enumerate}
	Then $(i)$ and $(ii)$ are equivalent and  both imply $(iii)$. Moreover, $(iii)$ implies that $(ii)$ holds in a neighbourhood of  any point in $M$.

\end{pr}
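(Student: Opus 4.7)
The plan is to work in a local null frame $(V,U,e_3,\ldots,e_n)$ with $g(V,V)=g(U,U)=0$, $g(V,U)=1$, and $(e_i)_{i\geq 3}$ spacelike and orthonormal, orthogonal to $V,U$. This is the frame adapted to an isotropic direction and turns the otherwise slippery invariants in (K1) into tractable matrix computations. I would proceed in the order (ii)$\Rightarrow$(i), (i)$\Rightarrow$(ii), (ii)$\Rightarrow$(iii), (iii)$\Rightarrow$(ii) locally.

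For (ii)$\Rightarrow$(i), the hypothesis forces $A^V V=0$, $A^V e_i=\alpha(e_i)V$, and $A^V U=\nabla_U V\in V^\perp$. A direct computation of the adjoint $(A^V)^*$ in the null frame shows that $A^V$, $B^V=\tfrac12(A^V+(A^V)^*)$ and $C^V=\tfrac12(A^V-(A^V)^*)$ have matrix entries supported in ``nilpotent'' positions whose contributions to $\tr(A^V)$, $\tr((B^V)^2)$ and $-\tr((C^V)^2)$ all vanish. For the converse (i)$\Rightarrow$(ii), I start from $A^V(TM)\subset V^\perp$ (which follows from $g(V,V)=0$) and from $\nabla_V V=fV$ (since $g(\nabla_V V,\nabla_V V)=0$ and isotropic vectors in $V^\perp$ are multiples of $V$). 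Writing $\nabla_U V=pV+\sum q_k e_k$ and $\nabla_{e_i}V=r_i V+\sum_{j\geq 3}s_{ij}e_j$ and expanding, one obtains
\begin{align*}
g(B^V,B^V)&=\tfrac{f^2}{2}+\tfrac14\sum_{i,j}(s_{ij}+s_{ji})^2,\\
g(C^V,C^V)&=-\tfrac{f^2}{2}+\tfrac14\sum_{i,j}(s_{ij}-s_{ji})^2.
\end{align*}
Vanishing of $g(B^V,B^V)$ forces $f=0$ and $s_{ij}+s_{ji}=0$; then vanishing of $g(C^V,C^V)$ forces $s_{ij}-s_{ji}=0$, whence $s_{ij}=0$. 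This yields $\nabla_V V=0$ and $\nabla_{e_i}V=r_i V$, which is (ii) with the linear extension $\alpha(e_i)=r_i$, $\alpha(V)=0$ on $V^\perp$.

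For (ii)$\Rightarrow$(iii), take $F=V^\perp$. It has rank $\dim M-1$, its induced metric is degenerate with radical $\R V$, and the identity $\nabla_X V\in\R V$ for $X\in\Gamma(F)$ gives both involutivity, via $g([X,Y],V)=-g(Y,\nabla_X V)+g(X,\nabla_Y V)=0$, and total geodesicity, via $g(\nabla_X Y,V)=X(g(Y,V))-g(Y,\nabla_X V)=0$. For (iii)$\Rightarrow$(ii) locally, I pick a non-vanishing local section $V$ of $F^\perp$; since $g|_F$ is degenerate of corank one the radical is exactly $F^\perp\subset F$, so $V$ is isotropic and $V^\perp=F$. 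Total geodesicity gives $\nabla_X Y\in\Gamma(F)$ for $X,Y\in\Gamma(F)$, and then $g(\nabla_X V,Y)=-g(V,\nabla_X Y)=0$ since $V\in F^\perp$, hence $\nabla_X V\in\R V$. In particular $\nabla_V V=\lambda V$ for some function $\lambda$, and a conformal rescaling $V\mapsto e^\varphi V$ with $\varphi$ solving the first-order ODE $V(\varphi)+\lambda=0$ along the flow of $V$ absorbs the geodesic defect and delivers (ii).

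The main obstacle is the direction (i)$\Rightarrow$(ii): the Lorentzian signature forbids any pointwise positivity for $g(B^V,B^V)$ or $g(C^V,C^V)$ individually, so neither invariant alone carries the geometric content one wants. The crucial observation is that after expansion in a null frame their sum reduces to $\sum_{i,j}s_{ij}^{\,2}$, a genuine sum of squares, which is what finally detects the vanishing of the spacelike block of $A^V$ and pins down the form of $\nabla V$ required in (ii).
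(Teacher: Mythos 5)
Your proof is correct and follows essentially the same route as the paper's: a null frame adapted to $V$, the observation that $A^V(T M)\subset V^\perp$ together with positive-definiteness of $g$ on $V^\perp/\R V$ turning the vanishing of $g(B^V,B^V)+g(C^V,C^V)=\tr\bigl(A^V (A^V)^*\bigr)$ into a sum of squares, and the same local rescaling $V\mapsto e^{\varphi}V$ with $V(\varphi)=-\lambda$ for $(iii)\Rightarrow(ii)$. The only cosmetic difference is that you expand the two scalar invariants in components and extract $\na_VV=0$ from $g(B^V,B^V)=0$ alone, whereas the paper argues invariantly and obtains it from $\tr(A^V)=0$.
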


\begin{proof} $(i)\Longrightarrow(ii).$  Assume  that $(M,g)$ is a Kundt spacetime. This means that there exists a non-singular vector field $V$ satisfying \eqref{eq1}.  Fix a point $p\in M$ and denote by $A$ to the endomorphism given by $Au=\na_uV$ for any $u\in T_pM$. Choose an isotropic vector $U\in T_pM$ such that $g(U,V)=1$ and an orthonormal basis $(e_1,\ldots,e_{n-2})$ of $\left\{U,V_p \right\}^\perp$. Note that
	\[ g(A(V),A(V))=g(V_p,V_p)=g(A(V),V)=0. \] Thus
	the vector subspace $\mathrm{span}\{A(V), V \}$ is totally isotropic, hence its dimension  equals 1 which means  $A(V)=\al_0 V$ for some $\al_0\in\R$. On the other hand, since $g(V,V)=0$, for any $u\in T_pM$, $g(A(u), V)=g(\na_uV,V)=0$ hence $A(T_pM)\subset V^\perp$. This implies that for any $u\in T_pM$, $g(A(u),A(u))\geq0$ and $g(A(u),A(u))=0$ if and only if $A(u)=\al(u)V$.  Now, from \eqref{eq1},
	\[ 0=\tr(A^*A)=2g(A(V),AU)+\sum_{i=1}^{n-2}g(A(e_i),A(e_i))=\sum_{i=1}^{n-2}g(A(e_i),A(e_i)).  \]
	Moreover, we have
	\[ 0=\tr(A)=g(AV,U)+g(AU,V)+\sum_{i=1}^{n-2}g(Ae_i,e_i)=\al_0 \] hence
	$\na_VV=0$. This completes the proof of $(i)\Longrightarrow(ii)$.

	Let us prove  $(ii)\Longrightarrow(i)$. Fix a point $p$ and consider $A$ and $(U,V,e_2,\ldots,e_{n-2})$ as defined above. We have $A(T_pM)\subset \R V$ hence $V^\perp\subset\ker A^*$. With this fact in mind, we get

	\begin{align*} \tr(A)&=g(AU,V)+g(AV,U)+\sum_{i=2}^ng(Ae_i,e_i)=0,\\
	\tr(BB^*)&=2g(BU,BV)+\sum_{i=2}^ng(Be_i,Be_i)=0,\\
	\tr(CC^*)&=2g(CU,CV)+\sum_{i=2}^ng(Ce_i,Ce_i)=0.
	\end{align*}
	This completes the proof of $(i)\Longrightarrow(ii)$.
	
	Let us prove now that $(ii)\Longrightarrow(iii)$.
	Let $F=V^\perp$. We have, for any $X,Y\in\Ga(F)$,
	\[ g(\na_XY, V)=-g(Y,\na_XV)=-\al(X)g(Y,V)=0 \] hence $\na_XY\in\Ga(F)$. This shows that $F$ is integrable and defines a degenerate codimension one totally geodesic foliation.
	
	Now, we prove that if $(iii)$ holds then, for any $p\in M$, there exists a vector field $V$ near $p$ satisfying \eqref{eq1}. So, 
	suppose that there exists an integrable degenerate codimension one   sub-bundle $F\subset TM$ which  defines a totally geodesic foliation. Fix a point $p\in M$ and choose a non-singular vector field $V\in\Ga(F^\perp)\subset\Ga(F)$ near $p$. It is obvious that $V$ is isotropic. Moreover, since $\na_VV\in\Ga(F)$ and $g(\na_VV,V)=0$ then  $\na_VV=\al V$. As above,  denote by $A$ the endomorphism $A_p^V$ and choose a basis $(u,V_p,e_1,\ldots,e_{n-2})$. We have, obviously, that $A(T_pM)\subset F_p$.  Moreover, since $F$ is totally geodesic, for any $X,Y\in\Ga(F)$,
	\[ 0=g(\na_XY,V)=-g(Y,\na_XV) \]which implies that $A(F_p)\subset\R V$. So far, we have shown that locally near $p$, for any $X\in \Ga(F)$,
	\[ AV=\al_0V,\, A(X)=\al(X)V. \]
	To finish  the proof, we look for a vector field $V'=e^fV$ where $f$ is a function such that $V'$ satisfies $(ii)$. This is equivalent to
	 $V(f)=-\al_0$ and such a function exists locally.
	\end{proof}

\bigskip

\subsection{Kundt coordinates}

Another way to compare the Kundt property with the existence of a codimension one lightlike geodesic foliation is given by the following 
fact which asserts  the existence of adapted local coordinates associated to 
lightlike geodesic foliations, where the metric has a special form. The  same adapted coordinates are known to characterize   Kundt spacetimes.

\begin{pr} Let $(M,g)$ be a Lorentz manifold satisfying $(iii)$ of Proposition \ref{equiv}. Then near any point in $M$ there exists a local coordinates system  $(v, u, x =  (x^2, \ldots x^n))$
	where the metric has the form:
	
	$$g =  2dudv+H(v,u,x)du^2+\sum_{i=2}^nW_i (v, u, x) dudx^i
	  +\sum_{i,j} h_{ij} (u, x) dx^i dx^j.$$

	\end{pr}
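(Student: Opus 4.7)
The plan is to construct coordinates $(v,u,x^2,\ldots,x^n)$ in which $u$ is a defining function for the leaves of $\F$, $v$ is an affine parameter along the null generators contained in these leaves, and the $x^i$ parametrize a transversal within each leaf; the announced metric form then follows by direct inspection.

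First I would produce a preferred null geodesic vector field. Near $p\in M$, choose a smooth function $u$ whose level sets are the leaves of $\F$, so that $F=\ker du$. Degeneracy of $g|_F$ with $\dim F=n-1$ forces $F^\perp\subset F$, so $du$ vanishes on the one-dimensional null line field $F^\perp$ and is itself null with respect to $g^{-1}$. Let $V:=(du)^\sharp$, so $V_\mu=(du)_\mu$; this is a null section of $F^\perp$. Since $du$ is simultaneously closed (being exact) and $g^{-1}$-null, $V$ is automatically affinely parametrized:
\[
(\na_VV)_\mu \;=\; V^\nu\na_\nu V_\mu \;=\; V^\nu\na_\mu V_\nu \;=\; \tfrac12\na_\mu(V^\nu V_\nu) \;=\; \tfrac12\na_\mu g(V,V) \;=\; 0,
\]
where the second equality uses $d(du)=0$ and the last uses $g(V,V)\equiv 0$.

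Next I would set up the chart. Pick a codimension-one hypersurface $N$ through $p$ transverse to $V$. Since $du(V)=0$ and $V\notin TN$, the restriction $u|_N$ is a submersion, which I complete to coordinates $(u,x^2,\ldots,x^n)$ on $N$. The ambient $u$ is automatically $V$-invariant ($V(u)=du(V)=0$); I then extend the $x^i$ from $N$ to a neighbourhood of $p$ by constancy along the flow of $V$, and take $v$ to be the $V$-flow parameter from $N$. By transversality, $(v,u,x^2,\ldots,x^n)$ is a local chart near $p$ with $\partial_v=V$.

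Finally I would read off the metric. By construction $g_{vv}=g(V,V)=0$, $g_{uv}=du(\partial_u)=1$, and $g_{vi}=du(\partial_{x^i})=0$, so $g$ has the stated shape for some functions $H,W_i,h_{ij}$; only the $v$-independence of $h_{ij}=g(\partial_{x^i},\partial_{x^j})$ is nontrivial. Using $[V,\partial_{x^i}]=0$ and total geodesy of $F$, which yields $\na_X V\in F^\perp=\R V$ for $X\in\Ga(F)$ (since $g(\na_XV,Y)=-g(V,\na_XY)=0$ for $Y\in\Ga(F)$), I obtain $g(\na_V\partial_{x^i},\partial_{x^j})=g(\na_{\partial_{x^i}}V,\partial_{x^j})=0$, and symmetrically for the other term, giving $\partial_v h_{ij}=0$. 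The main obstacle I anticipate is the identity $\na_VV=0$ for $V=(du)^\sharp$: the calculation is short but crucially combines closedness of $du$ with its $g^{-1}$-nullity, the latter being exactly what hypothesis $(iii)$ supplies.
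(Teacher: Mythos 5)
Your proof is correct, and while its overall skeleton (a null geodesic field spanning $F^\perp$, coordinates flowed out from a transversal, then a computation of the metric components using commutation and total geodesy) matches the paper's, the key construction is genuinely different. The paper obtains $V$ by choosing a null section $T$ of $F^\perp$ along a transversal hypersurface $\Sigma$ and pushing it forward by the exponential map, so that $\na_VV=0$ holds by construction and the tangency of $V$ to $F^\perp$ is what must be checked (via the auto-parallelism of $F^\perp$ established in Proposition \ref{equiv}); the cross-terms $g(V,U)$ and $g(V,X_i)$ are then controlled by showing $g(V,Z)$ is constant along the $V$-trajectories for any $Z$ commuting with $V$. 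You instead take $V=(du)^\sharp$ for a local defining function $u$ of the foliation: degeneracy of $g|_F$ forces $F^\perp\subset F$, hence $du$ is null, and the classical identity for a closed null $1$-form gives $\na_VV=0$ with no appeal to total geodesy at that stage. This buys you two things: the geodesic property of $V$ comes from an elementary pointwise identity rather than from the exponential map, and the components $g_{vv}=0$, $g_{uv}=1$, $g_{vi}=0$ are immediate from $g(V,\cdot)=du$ in the chart, so the paper's ``constant along trajectories'' lemma is not needed. Total geodesy is then used only where it is genuinely indispensable, namely to get $\na_XV\in\R V$ for $X\in\Ga(F)$ and hence the $v$-independence of $h_{ij}$ --- exactly the step the paper also reduces to. Both arguments are complete; yours isolates more cleanly which part of hypothesis $(iii)$ (degeneracy versus total geodesy) is responsible for which feature of the normal form.
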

	
	\begin{remarks} - Observe that the functions $h_{ij}$ do not depend on $v$.
	
	- The foliation in Proposition \ref{equiv} corresponds to the (local) $u$-levels.
	
	- One can also show the converse, that  a  foliation admitting an adapted chart where the metric has such a form 
	is  lightlike geodesic.
	
	\end{remarks}

	\begin{proof} Suppose that there exists a vector sub-bundle $F$ of $TM$ of rank $(\dim M)-1$ such that the restriction of the metric to $F$ is degenerate and the $\Ga(F)$ is stable by the Levi-Civita product.

		Fix a point $p\in M$ and let $\Sigma$  be a local hypersurface  containing $p$ and transversal
		to $F^\perp$.  Then $F_\Sigma=F\cap T\Sigma$ determines a  foliation  on $\Sigma$  hence there exists a  
		 coordinates system
		$({{x^2}}, \ldots,{x^n}, {u}  )$ on $\Sigma$ such that  the  leaves of $F_\Sigma$ are the ${u}$-levels. 
		There is  a  section $T:\Sigma \too (F^\perp)_{|\Sigma}$ such that
		$g( T, 	\frac{\partial}{ \partial {u} })  = 2$.  Choose an injective immersion $\phi:\R^{n-1}\too M$ such that $\phi(\R^{n-1})=\Sigma$. Then there exists $\e>0$ such that the map $\Phi:\R^{n-1}\times(-\e,\e)\too M$ given by $\Phi(t,s)=\exp_{\phi(t)}(s T)$ is a diffeomorphism into its image. Denote by $V$ the image by $\Phi$ of the vector field $\frac{\partial}{\partial s}$. Since $F^\perp$ is totally geodesic, then $V$ is tangent to $F^\perp$  hence satisfies $g(V,V)=0$. By construction, we have $\na_VV=0$ and  according to the proof of Proposition \ref{equiv}, for any $X\in\Ga(F)$, $\na_XV=\al(X)V$.
		
		On the other hand, the vector fields $\frac{\partial}{\partial { u}} , \frac{\partial}{\partial {x^2} }, \ldots, \frac{\partial}{\partial {x^n}   }$ on $\Sigma$ define a family of vector fields on $\R^{n-1}$ hence define a family of vector fields on $\R^{n-1}\times(-\e,\e)$ which commute with $\frac{\partial}{\partial s}$. Let $U,X_2,\ldots,X_n$ be their images by $\Phi$. 
		  We deduce that $V, U, X_2, \ldots, X_n$ are commuting, and  give rise to a local coordinate system $(v, u, x^2, \ldots, x^n)$ on $M$ such that
		\[ V=\frac{\partial}{\partial v},\; U=\frac{\partial}{\partial u}\esp X_i=\frac{\partial}{\partial x_i},\quad i=2,\ldots,n.  \]
		Observe now that, for any vector field $Z$ commuting with $V$, the scalar product $g(V,Z) $ is constant along the $V$-trajectories.
		Indeed, since $g(V,V)=0$ and $[Z,V]=0$ we get
		\[ V.g(V,Z)=g(\na_VV,Z)+g(V,\na_VZ)=g(V,\na_ZV)=0. \]
		We deduce that  for any $i=2,\ldots,n$, $g(V,X_i)$ and $g(U,V)$ are constant along the trajectories of $V$ and since they are constant along $\Sigma$ we get that $g(U,V)=2$ and $g(V,X_i)=0$. Moreover, we have
		\[ V.g(X_i,X_j)=g(\na_VX_i,X_j)+g(X_i,\na_VX_j)=g(\na_{X_i}V,X_j)+
		g(X_i,\na_{X_j}V)=\al(X_i)g(V,X_j)+\al(X_j)g(X_i,V)=0. \]
		This completes the proof.
	\end{proof}

\begin{remark} For a general codimension one  lightlike foliation, not necessarily geodesic, we have similar adapted coordinates, but with 
the functions $h_{ij}$ depending also on $v$. 

\end{remark}

\section{Kundt Groups} \label{Section Kundt groups}

A {\it Lorentz Lie group} is a Lie group $G$ endowed with a left invariant Lorentzian metric $g$. Denote by $\G$ the Lie algebra of $G$ and $\prs=g(e)$. We call $(\G,\prs)$ a Lorentz Lie algebra. The Levi-Civita product is the product $\bullet$ on $\G$ given by
\begin{equation}\label{lc}
2\langle u\bullet v,w\rangle =\langle[u,v],w\rangle+ \langle[w,u],v\rangle+\langle[w,v],u\rangle,\quad u,v,w\in\G.
\end{equation}

A {\it Kundt Lie group} (resp. {\it locally Kundt Lie group}) is a Lorentz Lie group $(G,g)$ having an isotropic left invariant vector field satisfying \eqref{eq2} (resp \eqref{eq3}). 

\begin{pr}\label{kundt} Let $(G,g)$ be a connected Lorentz Lie group. Then the following are equivalent:
	\begin{enumerate}\item[$(i)$] $(G,g)$ is a locally Kundt Lie group.
		
		\item[$(ii)$] There exists  a codimension one subalgebra $\h$ of $\G$ which is degenerate and stable by the Levi-Civita product. 
		
		\end{enumerate}
		
		Moreover, if $(G,g)$ is a locally  Kundt Lie group then its is a Kundt Lie group if and only if, for any generator   $e$  of  $\h^\perp$, $e\bullet e=0$.

	\end{pr}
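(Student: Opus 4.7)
The plan is to read off the equivalence directly from Proposition \ref{equiv}, using the dictionary between left invariant geometry on $G$ and linear algebra on $\G$. If $V$ is a left invariant vector field, then $V^\perp$ is a left invariant distribution, determined by the codimension one subspace $\h := V_e^\perp \subset \G$. The three geometric conditions on $V^\perp$ furnished by Proposition \ref{equiv} (degeneracy, integrability, total geodesy) will translate, respectively, into $\h$ being degenerate, a subalgebra, and stable under $\bullet$. Conversely, the only nontrivial ingredient needed is that for left invariant vector fields $X,Y$ on $G$, $\nabla_XY$ is the left invariant extension of $X\bullet Y$, and $g(X,Y)$ is constant on $G$.

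For $(i)\Longrightarrow(ii)$, assume $(G,g)$ is locally Kundt, with left invariant isotropic $V$ satisfying $(\mathrm{LKbis})$. The vector $V_e$ lies in $\h^\perp$ by definition and in $\h$ since it is isotropic, so $\h$ is degenerate. Proposition \ref{equiv} guarantees that $V^\perp$ is an integrable, totally geodesic codimension one distribution; at the identity this reads $[\h,\h]\subset\h$ and $\h\bullet\h\subset\h$.

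For $(ii)\Longrightarrow(i)$, let $\h$ be a codimension one degenerate subalgebra stable under $\bullet$. Since $\dim\h^\perp=1$ and $\h\cap\h^\perp\neq\{0\}$, one has $\h^\perp\subset\h$, so any generator $e$ of $\h^\perp$ is isotropic. Let $V$ be its left invariant extension. The key step is to show $X\bullet e\in\R e$ for every $X\in\h$. Using that $g$ is constant on pairs of left invariant vector fields, for any $Y\in\h$,
\[ 0 = X\cdot g(e,Y) = g(X\bullet e, Y) + g(e, X\bullet Y) = g(X\bullet e, Y), \]
since $X\bullet Y\in\h$ and $e\perp\h$. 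Hence $X\bullet e\in\h^\perp=\R e$, and we may write $X\bullet e=\al(X)e$ for a linear form $\al$ on $\h$. Translating back to vector fields gives $\na_XV=\al(X)V$ for every left invariant $X\in\Ga(V^\perp)$, and $C^\infty(M)$-linearity of $X\mapsto\na_XV$ extends this to arbitrary sections of $V^\perp$, proving $(\mathrm{LKbis})$.

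For the final assertion, the extra condition in $(\mathrm{K}2)$ is $\na_VV=0$; since $V$ is left invariant, $\na_VV$ is the left invariant field associated to $e\bullet e\in\G$, so $(\mathrm{K}2)$ holds precisely when $e\bullet e=0$. Rescaling $e$ by a nonzero $\lambda$ rescales $e\bullet e$ by $\lambda^2$, so this condition is independent of the chosen generator. The main (and only) nontrivial obstacle is the metric-compatibility computation above; the rest of the proof is a straightforward translation across the correspondence between left invariant objects on $G$ and their values at $e$.
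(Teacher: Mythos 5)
Your proof is correct and follows essentially the same route as the paper: both directions reduce to evaluating the Levi-Civita connection on left invariant fields at the identity, with the key computation $0=g(X\bullet e,Y)+g(e,X\bullet Y)=g(X\bullet e,Y)$ forcing $X\bullet e\in\h^\perp=\R e$, exactly as in the paper's appeal to the proof of Proposition \ref{equiv}. Your treatment of the final assertion (including independence of the generator via $\lambda^2$-rescaling) just makes explicit what the paper calls obvious.
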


\begin{proof}  Let us prove that $(i)$ implies $(ii)$.  $(G,g)$ is a locally Kundt Lie group if and only if there exists a left invariant vector field $V$  satisfying \eqref{eq3}. The codimension one subspace $\h=V(e)^\perp$ is degenerate and, for any $u,v\in\h$, denote by $u^l$ and $v^l$ the corresponding left invariant vector fields. Then
	\[ g(\na_{u^l}v^l,V)=u^l.g(v^l,V)-g(v^l,\na_{u^l}V)=-\al(u^l)g(v^l,V)=0 \]
	 hence $\na_{u^l}v^l(e)=u\bullet v\in\h$. This means that $\h$ is stable by the Levi-Civita product, which completes the proof of $(i)\Longrightarrow(ii)$.
	
	Le us show now that $(ii)\Longrightarrow(i)$. Suppose there exists $\h$ a codimension one  degenerate subalgebra of $\G$ which is     stable by the Levi-Civita product and consider $v$ a generator of $\h^\perp$. Denote by $V$ the left invariant vector field associated to $v$. Then, according to the proof of Proposition \ref{equiv}, we have, for any $x\in\h$,
	\[ \na_{x^l}V=\al(x^l)V\esp \na_VV=\al_0 V \]where $\al_0$ is a constant. The last assertion is obvious. \end{proof}
	
	\begin{Def}\label{def} Let $\G$ be a Lie algebra.  A Kundt pair on $\g$ is a pair $(\prs,\h)$ where $\prs$ is a Lorentzian product on $\G$ and $\h$ is a $\prs$-degenerate codimension one subalgebra stable by the Levi-Civita product $\bullet$ given by \eqref{lc} and for any $e\in\h^\perp$, $e\bullet e=0$.
		
		\end{Def}
	
	There is a large class of Lie groups which cannot carry a locally   Kundt group structure.
	
	\begin{Le}\label{compact} Let $\G$ be a semi-simple compact Lie algebra. Then $\G$ cannot have a codimension one subalgebra.
		
		\end{Le}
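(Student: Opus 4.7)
The plan is to show that a codimension one subalgebra $\h$ of a compact semisimple Lie algebra $\G$ is automatically an ideal, and then to rule this out by semisimplicity.

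First, I would exploit the Cartan–Killing criterion: the Killing form $B$ of a compact semisimple Lie algebra is negative definite, hence non-degenerate on any subspace. In particular, its restriction to $\h$ is non-degenerate, so $\G$ splits as a $B$-orthogonal direct sum $\G=\h\oplus\h^\perp$ with $\h^\perp=\R e$ one-dimensional and $B(e,e)\ne 0$.

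Next I would verify that $\h$ is an ideal. For any $x\in\h$, write $[e,x]=y+c\,e$ with $y\in\h$ and $c\in\R$. Pair with $e$:
\[
B([e,x],e)=B(y,e)+c\,B(e,e)=c\,B(e,e),
\]
since $B(y,e)=0$ by orthogonality. On the other hand, ad-invariance of $B$ gives
\[
B([e,x],e)=B(e,[x,e])=-B(e,[e,x]),
\]
so $B([e,x],e)=0$. Combined with $B(e,e)\ne 0$ this forces $c=0$, i.e.\ $[e,\h]\subset\h$. Together with $[\h,\h]\subset\h$, this shows $\h$ is a codimension one ideal of $\G$.

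Finally, a codimension one ideal would produce a one-dimensional quotient Lie algebra $\G/\h$, which is necessarily abelian; but $\G$ semisimple satisfies $[\G,\G]=\G$, and this property is inherited by quotients, ruling out any non-trivial abelian quotient. Contradiction, so no such $\h$ exists. The only real step requiring any thought is the passage from subalgebra to ideal via the Killing-form argument above; once that is in hand, semisimplicity closes the argument immediately.
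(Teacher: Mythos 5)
Your proof is correct and follows essentially the same route as the paper: both split $\G=\h\oplus\R e$ orthogonally with respect to a definite ad-invariant bilinear form (you use the negative-definite Killing form, the paper an arbitrary bi-invariant positive-definite scalar product) and exploit ad-invariance to constrain the bracket between $e$ and $\h$. The only divergence is the endgame: you conclude that $\h$ is an ideal and contradict $[\G,\G]=\G$ via the one-dimensional abelian quotient, whereas the paper concludes that $[x,e]=0$ for all $x\in\h$, so $e$ is central, contradicting the triviality of the center; both closings are equally valid.
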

	
	\begin{proof}
	Suppose that $\G$ has a codimension one Lie subalgebra $\h$. Since $\G$ is compact, it carries  a bi-invariant scalar product $\prs$, i.e., $\ad_x$ is skew-symmetric for any $x\in\G$. 
	For any $x\in\h$, $\ad_x$ leaves $\h$ invariant and since it is skew-symmetric  it leaves $\h^\perp=\R e$ invariant, so $[x,e]=0$ and  $e$ is central which contradicts the fact that $\G$ is semi-simple.
	\end{proof}
	
	\begin{co} Let $G$ be a compact semi-simple Lie group. Then $G$ cannot carry any structure of locally Kundt Lie group.

		\end{co}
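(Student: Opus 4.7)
The plan is to derive this immediately by combining the two results established just before. First I would recall that, by Proposition~\ref{kundt}, the existence of a locally Kundt Lie group structure on $G$ is equivalent to the existence of a codimension one subalgebra $\h$ of the Lie algebra $\G$ which is degenerate for the Lorentzian scalar product and stable under the Levi-Civita product. In particular, the mere existence of such a structure forces $\G$ to possess at least one codimension one Lie subalgebra, regardless of any metric considerations.

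Next I would invoke Lemma~\ref{compact}, which asserts precisely that a semi-simple compact Lie algebra admits no codimension one subalgebra. Since $G$ is assumed compact semi-simple, its Lie algebra $\G$ is compact semi-simple, and the lemma applies. Thus the structural requirement coming from Proposition~\ref{kundt} is incompatible with the algebraic nature of $\G$.

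The argument is therefore essentially a one-line combination: assuming $G$ admits a locally Kundt Lie group structure would produce, via Proposition~\ref{kundt}, a codimension one subalgebra of $\G$, contradicting Lemma~\ref{compact}. Hence no such structure can exist, which proves the corollary. There is no real obstacle here, since both of the nontrivial ingredients (the characterization of locally Kundt structures through codimension one subalgebras, and the nonexistence of such subalgebras in the compact semi-simple case via the bi-invariant scalar product and skew-symmetry of $\ad_x$) have already been established.
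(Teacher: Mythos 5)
Your proposal is correct and is exactly the argument the paper intends: the corollary is stated immediately after Lemma~\ref{compact} precisely so that it follows by combining that lemma with the characterization in Proposition~\ref{kundt}. Nothing is missing.
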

	
	The oscillator group named so by Streater in \cite{streater}, as a four-dimensional connected, simply connected
	Lie group, whose Lie algebra (known as the oscillator algebra) coincides with the one generated
	by the differential operators, acting on functions of one variable, associated to the harmonic
	oscillator problem. The oscillator group has been generalized to any even dimension $2n \geq 4$,
	and several aspects of its geometry have been intensively studied, both in differential geometry
	and in mathematical physics (see \cite{bm, calvaruso, duran, gadea, levi, muller}). Oscillator Lie groups have a natural left invariant Kundt structure.
	
	\begin{exem} For $n \in \N^*$ and $\la = (\la_1,\ldots,\la_n) \in \R^n$ with $0 < \la_1 \leq \cdots \leq \la_n$, the $\la$-oscillator group, denoted by $G_\la$, is the Lie group with the underlying manifold $\R^{2n+2} = \R \times \R\times \C^n$ and product $$(t,s,z).(t',s',z') = \Bigl( t+t',s+s'+\frac12 \sum_{j=1}^n \mbox{Im }[\bar{z}_j \exp(it\la_j)z_j'],
		\ldots, z_j+\exp(it\la_j)z'_j, \ldots \Bigr) .$$
		Its Lie algebra  $\G_\la$ is $\R \times \R\times \C^n$ with its canonical basis
		$\B = \left\{e_{-1},e_0,e_j,\ch_j\right\}_{j=1,\ldots,n}$  such that
		\[ e_{-1}=(1,0,0),e_0=(0,1,0),\; e_j=(0,0,(0,\ldots,1,\ldots,0))\esp \ch_j=
		(0,0,(0,\ldots,\imath,\ldots,0)). \]
		and the Lie brackets  are given by
		\begin{equation}\label{bracket}
		[e_{-1},e_i] = \la_i \ch_i, \qquad[e_{-1},\ch_i] = -\la_ie_i, \qquad[e_i,\ch_i] = e_0,
		\end{equation}
		for $i=1,\ldots,n$, the unspecified products are either given by antisymmetry or  zero.  For $x\in\G_\la$, let
		$$x=x_{-1}e_{-1}+x_0e_0+\sum_{i=1}^n(x_ie_i+\check{x}_i\check{e}_i).$$
		The non-degenerate symmetric bilinear form
		\begin{equation}\label{lorentz}\textbf{k}_\la(x,x):=2x_{-1}x_0+\sum_{j=1}^n\frac1{\la_j}(x_j^2+\check{x}_j^2)\end{equation}satisfies
		$$\textbf{k}_\la([x,y],z)+\textbf{k}_\la(y,[x,z])=0,\quad\mbox{for
			any}\quad x,y,z\in\G_\la$$and hence defines  a
		Lorentzian bi-invariant metric $g$ on $G_\la$. The Levi-Civita connection of $g$ is given by $\na_XY=\frac12[X,Y]$ for any left invariant vector fields $X,Y$. Hence the left invariant vector field associated to $e_0$ is parallel and defines a Kundt group structure on $G_\la$.

		\end{exem}
\section{Kundt Spacetimes vs Kundt Groups}
\label{Section Kundt Spacetimes vs Kundt Groups}

In dimension 3, we have the following result.

\begin{theorem}  \label{main}
	Let $G$ be a Lie group of dimension 3  endowed with a left invariant Lorentz metric $g$. Assume 
	there exists a degenerate totally geodesic hypersurface $\Sigma$ in $(G, g)$ (not necessarily complete). \begin{enumerate}
		
		\item[$(i)$] Then, either  $(G, g)$ has   a constant sectional curvature, or 
		$(G, g)$ is a locally  Kundt Lie group.
		\item[$(ii)$]  Furthermore, if the isotropy group of 1 in the isometry group 
		$\mathsf{Iso}(G, g)$ is non-compact, then such a hypersurface $\Sigma$ exists.
	\end{enumerate}
	
\end{theorem}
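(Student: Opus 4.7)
The plan is to handle (i) by analyzing how the tangent plane of $\Sigma$ transforms under left translations, and (ii) by constructing a totally geodesic degenerate hypersurface from a null direction fixed by a non-compact isotropy element.

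For (i), Proposition \ref{equiv} applied to $\Sigma$ yields a degenerate totally geodesic foliation $F$ thickening $\Sigma$ locally. For each $p \in \Sigma$ define $\h_p := dL_{p^{-1}}(T_p\Sigma) \subset \G$, a codimension-one degenerate subspace. The map $p \mapsto \h_p$ takes values in the space of degenerate codimension-one subspaces of $(\G, \prs)$, parameterized by the projectivized lightcone of $\prs$, which is one-dimensional. I split the argument according to whether this map is locally constant on $\Sigma$ or not.

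If $\h_p$ is locally constant, let $\h$ be its common value. The left-invariant distribution $D$ on $G$ with $D_1 = \h$ coincides with $T\Sigma$ along $\Sigma$, so $\Sigma$ is a $2$-dimensional integral submanifold of $D$ and $D$ is Frobenius integrable. Since left translations are isometries sending totally geodesic hypersurfaces to totally geodesic hypersurfaces, the union of all left translates of $\Sigma$ realizes $D$ as a degenerate totally geodesic foliation near $1$. Consequently $\h$ is a Lie subalgebra of $\G$ stable by the Levi-Civita product $\bullet$, and any generator $v$ of $\h^\perp$ satisfies $v \bullet v = 0$ (from $\na_V V = 0$ for the corresponding left-invariant null field $V$). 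Proposition \ref{kundt} then gives that $(G,g)$ is a locally Kundt Lie group. If instead $\h_p$ varies non-trivially, I apply the Cartan criterion $R_p(X, Y) Z \in T_p\Sigma$ for $X,Y,Z\in T_p\Sigma$ and translate back to $1$ to obtain $R_1(X, Y) Z \in \h_p$ for $X, Y, Z \in \h_p$. A genuine one-parameter family of such constraints on a single algebraic curvature tensor $R_1$ on the three-dimensional Lorentz space $(\G, \prs)$ forces $R_1$ to have constant-curvature form $R_1(X,Y)Z = c(\langle Y, Z \rangle X - \langle X, Z \rangle Y)$, and by homogeneity $(G, g)$ has constant sectional curvature.

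For (ii), the isotropy $K$ of $\mathsf{Iso}(G, g)$ at $1$ acts on $\G = T_1 G$ as a subgroup of $\O(\G, \prs) \cong \O(2,1)$. A non-compact element $\phi \in K$ is either hyperbolic or parabolic in $\O(2,1)$; in both cases, $\phi$ fixes a null line $\ell \subset \G$. Set $U := \ell^\perp$, a degenerate codimension-one subspace of $\G$. I verify the Cartan criterion $R_1(X, Y) Z \in U$ for all $X, Y, Z \in U$, so that $\Sigma := \exp_1(U)$ is the desired degenerate totally geodesic hypersurface through $1$. The key ingredient is that $R_1$ is $\phi$-invariant: in the hyperbolic case ($\phi$ has eigenvalues $\la, 1, \la^{-1}$ with $\la \neq 1$), a weight count on the $U$-transverse component of $R_1(X, Y) Z$ is incompatible with the $\phi$-action, so this component vanishes; the parabolic case is handled by a Jordan block refinement using antisymmetry and the first Bianchi identity for $R_1$.

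The main obstacle I expect is the curvature rigidity step in the second case of (i): converting the one-parameter family of Cartan constraints on $R_1$ into constant sectional curvature requires careful use of the algebraic structure of the three-dimensional Lorentzian Riemann tensor, leveraging the vanishing of the Weyl tensor in this dimension so that $R$ is determined by the Ricci tensor. A secondary technical point is the parabolic case of (ii), where the absence of eigenvalue separation makes the vanishing argument subtler than in the hyperbolic case.
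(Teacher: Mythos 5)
Your part (i) is essentially sound, though organized differently from the paper: you dichotomize on whether the translated tangent plane $\h_p=dL_{p^{-1}}(T_p\Sigma)$ is locally constant along $\Sigma$, whereas the paper counts germs of lightlike geodesic hypersurfaces through a point (Lemma \ref{3.geodesic.hypersurfaces}: three non-tangent ones force constant curvature) and then shows, in the one- and two-germ cases, that the tangent plane field is left invariant by a uniqueness/transversality argument. Both routes hinge on the same algebraic fact --- several distinct curvature-invariant lightlike hyperplanes for $R_1$ force the constant-curvature form --- which you assert but do not prove; your suggested route (in dimension 3, $R$ is determined by $\Ric$, and $V^\perp$ is curvature-invariant exactly when $\Ric(V,V)=0$, so infinitely many such null directions force $\Ric$ proportional to $g$) does work and is arguably cleaner than the paper's direct computation, but it is the content of a whole lemma and must be written out. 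Two smaller corrections: Proposition \ref{equiv} does not produce a foliation thickening a single hypersurface (you never use this, so it is harmless), and your claim that $v\bullet v=0$ for a generator of $\h^\perp$ is unjustified and false in general --- the normalization $V'=e^fV$ achieving $\na_{V'}V'=0$ destroys left invariance; fortunately the theorem only asserts \emph{locally} Kundt, for which Proposition \ref{kundt} does not require this.

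Part (ii) has a genuine gap. You invoke the converse of the ``Cartan criterion'': from the pointwise curvature-invariance $R_1(U,U)U\subset U$ of $U=\ell^\perp$ you conclude that $\exp_1(U)$ is a totally geodesic hypersurface. That implication is false outside locally symmetric spaces: Cartan's theorem requires curvature-invariance of the parallel transports of $U$ along all geodesics issuing from $1$ in directions of $U$, not just invariance at the point $1$. It demonstrably fails in the present setting: in dimension 3 the pointwise condition reduces to the single scalar equation $\Ric(V,V)=0$, which holds for some null direction $V$ whenever the Ricci form is indefinite, e.g.\ for many non-Einstein left-invariant Lorentz metrics on $\mathrm{SU}(2)$; yet $\mathfrak{su}(2)$ has no codimension-one subalgebra (Lemma \ref{compact}), so by part (i) such a group admits no degenerate totally geodesic hypersurface at all. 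Hence your weight and Jordan-block analysis only recovers the necessary condition $\Ric(V,V)=0$ and cannot produce $\Sigma$. The paper's mechanism is entirely different and is what is actually needed: take a diverging sequence $f_n$ in the isotropy group, view the graphs $Gr(f_n)\subset (G\times G,\ g\oplus(-g))$ as totally geodesic isotropic submanifolds, pass to a limit $L$ of (connected components of) these graphs, observe that $L$ is no longer a graph and therefore meets the vertical fibre, and project $L$ to obtain the degenerate totally geodesic hypersurface. You would need to replace your construction by this limit-of-graphs (or an equivalent stable-manifold) argument.
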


To prove this theorem, we need the following lemma which also appears  in  \cite{Melnick, Scot, Zeg2}. 

\begin{Le} \label{3.geodesic.hypersurfaces} 
	Let $(G,g)$ be a Lorentz Lie group of dimension 3 .
	If there are three non-tangent  (i.e. having different tangent planes at 1) lightlike geodesic hypersurfaces through 1, then 
	$(G, g)$ has constant curvature.
	
\end{Le}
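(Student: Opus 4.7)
The plan is to show that the hypothesis forces $\Ric_1 = \lambda \prs$ for some $\lambda$: in dimension three the Weyl tensor vanishes identically, so this Einstein condition is equivalent to constant sectional curvature, and left-invariance then propagates the conclusion globally. A degenerate $2$-plane in a $3$-dimensional Lorentz vector space is uniquely of the form $v^{\perp}$ for some isotropic $v$, so each $\Sigma_i$ determines a null line $\R v_i \subset \G$; the non-tangency hypothesis gives three distinct such lines, and they span $\G$, since three distinct null directions in a $3$-dimensional Lorentz space cannot be coplanar (any $2$-plane contains at most two null lines).

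For each $i$, Gauss--Codazzi with vanishing second fundamental form yields $R(X,Y)Z \in T_p\Sigma_i$ for all $X,Y,Z \in T_p\Sigma_i$ and all $p \in \Sigma_i$. At $p = 1$, choosing $\{v_i, w_i\}$ a basis of $T_1\Sigma_i$ with $w_i$ unit spacelike and orthogonal to $v_i$, the antisymmetry of $R$ in its first two slots reduces the constraint to the single scalar identity $R_1(v_i, w_i, w_i, v_i) = 0$. Inserting the three-dimensional formula expressing $R$ in terms of $\Ric$ and the scalar curvature, and using $\langle v_i, v_i\rangle = \langle v_i, w_i\rangle = 0$, $\langle w_i, w_i\rangle = 1$, this collapses to $\Ric_1(v_i, v_i) = 0$. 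Three point conditions are not enough by themselves, since the polynomial $v \mapsto \Ric_1(v,v)$ restricted to the projectivized null cone has degree four with five free coefficients. The extra equations come from applying the Codazzi constraint at every other point of each $\Sigma_i$: pulled back to $\G$ by left-translation, this produces for each $i$ a family of null vectors $\tilde v_i(p) \in \G$ satisfying $\Ric(\tilde v_i, \tilde v_i) = 0$; along the null geodesic generator $\gamma_i(t) = \exp_1(tv_i) \subset \Sigma_i$ the reduced tangent $\tilde v_i(t)$ evolves by $\dot{\tilde v}_i = -\tilde v_i \bullet \tilde v_i$, and differentiating the vanishing identity $\Ric(\tilde v_i(t), \tilde v_i(t)) = 0$ at $t=0$ to successive orders produces a tower of further scalar relations on $\Ric$ and the Levi--Civita product $\bullet$. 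Combining these from the three $\Sigma_i$ forces $v \mapsto \Ric_1(v,v)$ to vanish identically on the whole null cone; since a symmetric bilinear form vanishing on the null cone of a non-degenerate Lorentz scalar product in dimension three must be a scalar multiple of it (a standard consequence of the irreducibility of the defining polynomial of the cone together with polarization), one obtains $\Ric_1 = \lambda \prs$ as required.

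The main technical obstacle is this propagation argument: verifying that the higher-order infinitesimal data generated by the three hypersurfaces really close up to force identical vanishing on the null cone. The degenerate case to exclude is the one in which some individual family $\tilde v_i(\cdot)$ remains confined to a single null direction in $\G$ (a Kundt-like algebraic stabilization of that direction by the Levi--Civita product), and the three-non-tangent hypothesis is used precisely to preclude the simultaneous occurrence of such a degeneracy for all three families.
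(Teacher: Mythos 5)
Your setup is sound and the framing is attractive: reducing to the Einstein condition $\Ric_1=\lambda\prs$ (equivalent to constant sectional curvature in dimension three, and propagated globally by left invariance), identifying the pointwise content of total geodesy of each $\Sigma_i$ as the single scalar relation $\Ric_1(v_i,v_i)=0$, and observing that three such linear conditions cut out only a $3$-parameter family of algebraic curvature tensors containing the $1$-parameter constant-curvature family, so that the three point conditions cannot suffice by themselves. That counting is correct and pinpoints the real difficulty. But the step that would actually close the argument is missing. You assert that pulling back the null normals from all points of the $\Sigma_i$ and differentiating $\Ric(\tilde v_i(t),\tilde v_i(t))=0$ along the null generators ``forces $v\mapsto\Ric_1(v,v)$ to vanish identically on the null cone,'' yet no computation supports this, and you yourself flag it as the main obstacle. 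Moreover, the degenerate case you propose to exclude cannot be excluded by the non-tangency hypothesis via the mechanism you describe: if each $\Sigma_i$ is (a neighbourhood of $1$ in) a codimension-one subgroup whose Lie algebra is degenerate and stable under the Levi-Civita product --- exactly the locally Kundt situation of Proposition \ref{kundt} --- then every family $\tilde v_i(\cdot)$ is constant in $\g$, the tower of differentiated identities is vacuous, and your method returns only the three insufficient point conditions. Ruling out the coexistence of three such subalgebras in a non-constant-curvature Lorentz Lie algebra is essentially the content of the lemma itself, so the argument becomes circular precisely where it matters.

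For comparison, the paper argues directly on the algebraic curvature tensor at $1$: for each unit spacelike vector $e_k$ spanning $T_i\cap T_j$ it shows that the operator $W\mapsto R(e_k,W)e_k$ is a homothety of some ratio $\lambda^{(k)}$ on $e_k^\perp=\mathrm{span}\{V_i,V_j\}$ (this uses exactly your two relations $\Ric(V_i,V_i)=\Ric(V_j,V_j)=0$ together with the pair symmetry of $R$), and then reconstructs every component $R(e_i,e_j)e_k$ from the three resulting formulas, the crux being that the three ratios $\lambda^{(1)},\lambda^{(2)},\lambda^{(3)}$ agree --- these ratios are precisely the off-diagonal curvature components that your three conditions leave free. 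If you want to complete your approach, the missing ingredient is the analogue of that identification: an argument (from the curvature symmetries, or from the second Bianchi identity combined with homogeneity, or from a genuine non-infinitesimal use of the hypersurfaces) that kills the remaining two degrees of freedom. As written, the proposal stops exactly at the point where the proof has to begin.
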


\begin{proof} 
	
	Assume there exist three different lightlike geodesic hypersurfaces $\Sigma_1$, $\Sigma_2$ and $\Sigma_3$, through 
	$1 \in G$.  Let $T_1, T_2, T_3$ be their tangent spaces and $V_1, V_2, V_3$,  non-vanishing vectors in their 
	orthogonals $T_1^\perp, T_2^\perp $ and $T_3^\perp$. 
	
	Since the $\Sigma_i$'s are geodesic, each 
	$T_i$ is  invariant under the Riemann curvature: if $u, v, w \in T_i$, then  $R(u, v)w \in T_i$.

	Let $e_3 $ be  a unit (spacelike)  vector generating $T_1 \cap T_2$, and consider $A_{e_3}: V \to R(e_3, V)e_3$.
	Then $ \langle A_{e_3}(V_1), e_3\rangle  = \langle R(e_3, V_1)e_3, e_3 \rangle = 0$, that is $A_{e_3}(V_1)$ is
	orthogonal to   $e_3$.  But it is also orthogonal to $V_1$, by curvature-invariance of 
	$V_1^\perp$. Therefore,   $A_{e_3}(V_1)$ is  collinear to $V_1$, say $A_{e_3}(V_1) = \lambda_{12} V_1$. 
	Similarly $A_{e_3}(V_2) = \lambda_{21} V_2$.

	Consider $\langle R(e_3, V_1)e_3, V_2 \rangle$ , 
	which   equals $\langle R(e_3, V_2)e_3, V_1 \rangle$. It also equals $\lambda_{12} \langle V_1, V_2 \rangle =
	\lambda_{21} \langle V_1, V_2 \rangle 
	$. Since $V_1$ and $V_2$ are both null, $ \langle V_1, V_2 \rangle \neq 0$ then
	$\lambda_{12} = \lambda_{21}$.  In conclusion $A_{e_3}$ is a homothety on $(e_3)^\perp$, of ratio, say $\lambda$ ($= \lambda_{12} = \lambda_{21}$).
	
	Since $A_{e_3}(e_3) = 0$, we conclude that $R(e_3, W) e_3 = \lambda \big(W \langle e_3,  e_3 \rangle - \langle W, e_3\rangle e_3 \big)$, for any $W$.

	In a similar way, one defines $e_2$ and $e_1$ unit vectors generating $T_1 \cap T_3$ and $T_2 \cap T_3$, respectively, and deduce 
	a similar formula for $R(e_i, W)e_i$, with the same $\lambda$.

	In fact, up to orientation, the $e_i$ are uniquely defined. More precisely,  in a  Lorentz linear 3-space $(E, \prs)$, there is up to isometry,  a unique system of three 
	null directions. Indeed, with the notation above, we can choose generators $V_1, V_2$ and $V_3$ of these directions such that 
	$\langle V_i, V_j \rangle = 1$, for any $i \neq j$, so the matrix coefficients $ \langle V_i, V_j \rangle_{ij}$ are fully given. This implies that any two such systems
	are related by an isometry. 
	
	Now, the system $\{e_1, e_2, e_3\}$ is given (up to orientation) by  $\{V_1, V_2, V_3\}$, so all scalar products 
	$\langle e_i, e_j \rangle$ are given, i.e. computable by means of $\prs$.
	
	It follows that  $R(e_i, e_j)e_i$ are given for any $i, j$.
	
	Consider now $X= R(e_i, e_j)e_k$, with $k \neq i, j$ and $i \neq j$. Then $\langle X, e_k \rangle = 0$, and 
	$\langle X, e_i \rangle  =  \langle R(e_i, e_j) e_i, e_k \rangle$ and, hence, computable. Similarly for $ \langle X, e_j \rangle$,
	and therefore $X$ is computable. 
	
	From all this, it follows that all the curvaturse $R(e_i, e_j)e_k$ are computable, exactly as in the case of a space of constant curvature $\lambda$.
\end{proof}

\paragraph{Proof of Theorem \ref{main}}
\begin{proof}
\begin{enumerate}\item[$(i)$] It follows form the lemma that if  $(G, g)$ does not have constant curvature, then, through any   point  pass exactly one or two 
	(germs of)  lightlike geodesic hypersurfaces. 
	
	For the sake of clarity, let us consider first the case where there exists exactly one germ of such hypersurfaces. More precisely, 
	there exists  a geodesic lightlike hypersurface $\Sigma$ containing 1. Uniqueness means that for any  $S$ a geodesic lightlike 
	hypersurface,  with $1 \in S$,  then $\Sigma \cap S$ is a neighbourhood of 1 in both $\Sigma$ and $S$.
	
	For any $x \in G$, the translated hypersurface  $x \Sigma$ is the unique geodesic lightlike hypersurface passing through $x$.
	
	Let us see that the tangent space of $\Sigma$ is left invariant: if $x \in \Sigma$, then $T_x \Sigma = x T_1 \Sigma$ (the last notation means  
	the left translation  by $x$ of $T_1 \Sigma$). Indeed, both $\Sigma$ and $x \Sigma$ are geodesic lightlike hypersurfaces containing $x$  hence they 
	coincide near $x$, and thus have same tangent space: $T_x \Sigma = T_x( x \Sigma) = x T_1 \Sigma$. 
	
	This left invariance of $T\Sigma$, means that $\Sigma$ is a ``local subgroup''. Its ``maximal extension''  will be a subgroup which is 
	a geodesic lightlike hypersurface. To be more formal, one defines a plane field $E$ on $G$, with $E(x)$ being the tangent space of the unique geodesic lightlike hypersurface through $x$. So,  $E(x) = T_x (x \Sigma)$ which implies $E$ is left invariant. Uniqueness implies $E$  is integrable: $\Sigma$ is a local leaf of $E$  trough $1$.
	The global leaf of $E$ is a subgroup.
	
	\bigskip

	Let us now consider the case where we have two geodesic lightlike hypersurfaces $\Sigma^1$ and $\Sigma^2$ through 1, which do not coincide 
	near 1. this is equivalent to $ E(1) = T_1 \Sigma^1 \neq T_1 \Sigma^2 = F(1)$. 
	
	Let $x \in \Sigma^1$. Then through $x$, we have three geodesic lightlike hypersurfaces: $\Sigma^1, x \Sigma^1 $ and $x \Sigma^2$. Therefore, two among them coincide locally.  Let us assume $x$ is close to 1 and deduce that $\Sigma^1 $ and $x \Sigma^2$ can not coincide near $x$.  For this, it is enough to show they have different tangent spaces $A= T_x \Sigma^1 \neq T_x (x \Sigma^2) = B$.  If they were equal, they would have the same  translation to 1, $x^{-1} A = x^{-1} B$. On one hand, $x^{-1} B = T_x \Sigma^2 = F(1) $, and on the other hand, $x^{-1} A= x^{-1} T_x \Sigma$ is close to $E(1) = T_1 \Sigma^1$, by continuity of the tangent space of  $\Sigma^1$ and 
	the fact that $x$ is close to 1. Since $E(1)$ and $F(1)$ are transversal, the same is true for $x^{-1} A $ and $  x^{-1} B$, for $x$ sufficiently close to 1, and in particular 
	they are not equal, hence    $A \neq B$.  From all of this, we infer that    $\Sigma^1$ and $x \Sigma^1$   coincide near $x$.

	As in the case of a unique geodesic lightlike hypersurface, one deduces that $\Sigma^1$ is a ``local group''. More precisely, one defines two 
	left invariant plane fields $E$ and $F$, extending $E(1)$ and $F(1)$ respectively. Our previous argument implies that $\Sigma^1$ is a local  leaf 
	at 1 of $E$, and so $E$ is integrable, and the same applies for $F$. The $E$ and $F$-leaves of 1 are therefore two 
	subgroups which are   geodesic lightlike hypersurfaces.

	\item[$(ii)$] Now, assume,  
	  $I$, the isotropy group of 1 in  the isometry group $\Iso(G, g)$  is non-compact. 
	  %, then $(G, g)$ is a Kundt group (compare with \cite{?}).  To see this, let 
	Let $f_n \in I$ be a diverging sequence (it has non-convergent sub-sequence) and consider their  graphs $F_n= Gr(f_n) \subset G \times G$. Endow 
	$G \times G$ with the metric $g\oplus (-g)$, then the $F_n$'s are isotropic and totally geodesic.  Then we will  consider a limit $L$ of a subsequence of the $F_n$. 
	To give a formal meaning of this, consider a small convex neighbourhood $C$ of $(1, 1)$ in $G \times G$. This means any two points of $C$ can be joined by a unique
	geodesic segment contained in $C$. Consider $F_n \cap C$ and note $F_n^0$ the connected component of $(1, 1)$ in $F_n \cap C$. Now, one can give sense to 
	convergence of $F_n$, exactly as in the situation of affine subspaces in an affine flat space. More precisely, $F_n^0$ converge to $L$, if   the tangent spaces 
	$T_{(1,1)}F^0_n$ converge to $T_{(1,1)} L$. 
	
	Such a limit $L$ is a geodesic isotropic submanifold in $G \times G$ of dimension  equal to $\dim G$, but it is no longer a graph of some map $f: G \to G$, since otherwise, 
	$f_n$ will converge to $f$. Thus $L$ intersects non-trivially the vertical $\{1\} \times G$, and  hence projects onto a  degenerate  geodesic submanifold $\Sigma$ in $G \times \{1\}$.
	The intersection $L \cap (\{1\} \times G)$ has dimension one, since it is isotropic,  therefore $\Sigma$ is a hypersurface.

	\end{enumerate}

	\end{proof}

\subsection{Comments on the constant curvature case}

It is natural to ask what happens if $(G, g)$ has constant sectional curvature, say $c$?  Let us give here some examples and hints, details will appear elsewhere. The proof of  Theorem \ref{main} does not apply since there are infinitely many ``germs'' of lightlike geodesic hypersurfaces trough 1: any lightlike hyperplane in $T_1G$ is tangent to a   lightlike geodesic hypersurface exactly as in the 
universal Lorentz space
$\tilde{M}(c)$ of constant curvature $c$.  

\bigskip

$\bullet$ It turns out that if the isotropy group $I$ is non-compact and has dimension 1 or 2, then $(G, g)$ is a Kundt group. 
Indeed,  the proof of  Theorem \ref{main} can be  adapted  well to this situation   where $\dim I = 1$ or  $2$. Consider for this  the derivative action of $I$ on 
$\g = T_1G$. It  preserves exactly one or two hyperplanes which are in fact lightlike. Indeed,  let $L$ be a closed  connected non-compact subgroup
of ${\sf O }(1, 2) $. If $\dim L= 1$, then it is either a hyperbolic one parameter group, in this case it preserves exactly two lightlike hyperplanes, or 
it is a unipotent one parameter group and in this  case it preserves exactly one lightlike hyperplane. In case $\dim L= 2$, it is conjugate to the triangular subgroup of $\SL(2, \R)$ and preserves exactly one lightlike hyperplane. All these claims can  be confirmed by a direct check-in.  In summary, if $\dim I = 1, 2$, the argument in the proof 
of  Theorem \ref{main} can be adapted and yields a left invariant plane field $E$ and, thus,    a lightlike  geodesic subgroup $H$.

Let us give the following example with $c = 0$.  Consider on $\R^3$, the Lorentz metric $dx^2 + dydz$. The plane $E = \{ z=0\}$ is lightlike.
Its linear stabilizer  is a subgroup $S$ of dimension   2 in 
$\SO(1, 2)$ and, hence, its stabilizer in the full Poincar\'e group  $\SO(1, 2) \ltimes \R^3$ is $S \ltimes E$. It contains in particular the 3-dimensional
(non-unimodular)  group $G$ of elements $(t, a, b) \in \R \times E$ acting by 
$(x, y, z) \to (x + a, e^ty + b, e^{-t}z)$.   This action is   free  and  transitive on the upper half space $\{z >0\}$ and, hence, $G$ inherits a left invariant (non-complete) flat metric. It is Kundt, since its (abelian) subgroup $E$ has lightlike geodesic orbits.  Observe here that the full isometry group of this 
left invariant metric is $S \ltimes E$. In particular the  isotropy group has dimension one.

\bigskip

$\bullet$  
Assume now that the isotropy group has  dimension $3$. Thus $ \dim \Iso(G, g) = 6$, and $(G, g)$ is locally isometric to the universal space
$\tilde{M}(c)$ of constant curvature $c$.   However, a subgroup of dimension 3 in ${\sf O }(1, 2)$ contains at least its identity component, and also, a subgroup of dimension 6  in $\Iso (\tilde{M}(c))$ contains at least its identity component $\Iso^0 (\tilde{M}(c))$. Therefore, as a homogeneous space $(G, g)$ is globally 
isometric to $ \tilde{M}(c) = \Iso^0 (\tilde{M}(c)) / {\sf O }^0(1, 2)$. In other words, $G$ acts transitively and freely on $\tilde{M}(c)$ (or equivalently, $(G, g)$ has constant curvature $c$ and is complete). 

Let us give the example of the Euclide group $\Euc_2$.  Its universal cover $\widetilde{\Euc_2}$ acts simply transitively isometrically 
on $(\R^3, dx^2 + dy^2 - dz^2)$ by: 
$((x, y), z)   \to \big( (R_t(x, y) + (a, b)), z+ t \big)$, where $R_t$ is the rotation of angle $t$ and $(t, a, b) \in \widetilde{\Euc_2}$. Its unique 
2-dimensional subgroup is $\R^2$, which  acts by translations $(x, y, z) \to (x+a, y+b, z)$. It has spacelike geodesic orbits. Therefore
$\widetilde{\Euc_2}$ is a flat complete Lorentz group that is not  a Kundt group.

We believe this is the unique complete Lorentz group of constant curvature which is not a Kundt group?

\bigskip

$\bullet$ Finally, there are examples of flat groups $(G, g)$ with isotropy group $I$ of dimension 0, which are not Kundt groups. To see an example, consider as above the metric $dx^2 + dydz$.   Let $S \subset \SO(1, 2)$ be  the stabilizer of the isotropic direction 
$\R \frac{\partial }{\partial y}$ and $T$ the subgroup of translations in this direction. Take $G = S \ltimes T$. It has an open orbit on which it acts freely which allows one to endow it with a flat (non-complete) metric. One can show it is not a Kundt group.

\section{Classification of three dimensional unimodular simply-connected Kundt Lie groups}\label{section3}

 In this section, we give a complete classification of Kundt Lie group structures on three dimensional unimodular Lie groups.
 According to Proposition \ref{kundt},  the classification of Kundt Lie group structures on a simply connected Lie group $G$ is equivalent to the classification of Kundt pairs $(\h,\prs)$ on its Lie algebra $\G$. Two Kundt pairs $(\h_1,\prs_1)$ and $(\h_2,\prs_2)$ are called equivalent if there exists an automorphism of Lie algebra $\phi:\G\too\G$ such that $\phi(\h_1)=\h_2$ and $\phi^*(\prs_2)=\prs_1$.

In dimension 3, we have the following useful characterization of Kundt pairs.
\begin{pr} \label{dim}  Let $(\g,\prs)$ be a  Lorentzian Lie algebra and let $\h$ be a  codimension one subalgebra. Then:
	\begin{enumerate}\item[$(i)$] If $\h$ is abelian then $(\h,\prs)$ is a Kundt pair if and only if $\h$ is degenerate and if $e$ is a generator of $\h^\perp$ then $\ad_e(\G)\subset\h$.
		\item[$(ii)$] If  $\h$ is non-abelian then $(\h,\prs)$ is a Kundt pair if and only if $\h^\perp=[\h,\h]$ and if $e$ is a generator of $\h^\perp$ then $\ad_e(\G)\subset\h$.
		
	\end{enumerate}

\end{pr}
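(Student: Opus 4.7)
The plan is to fix an adapted basis of $\g$ and reduce both Kundt axioms (stability of $\h$ under the Levi--Civita product $\bullet$ and $e\bullet e=0$) to explicit algebraic constraints on brackets. In every direction of the equivalence, $\h$ turns out to be degenerate: by the definition of a Kundt pair, directly in (i), and because $\h^\perp=[\h,\h]\subset\h$ in (ii). So I may pick $e\in\h^\perp$ with $\langle e,e\rangle=0$ and complete to a Witt basis $(e,x,f)$ of $\g$ with $\langle e,f\rangle=1$, $\langle x,x\rangle=1$, and all other pairings zero. Then $\h=\spa(e,x)=e^\perp$, and since $\h$ is a subalgebra there are scalars $a,b$ with $[e,x]=ae+bx$.

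From the Koszul formula \eqref{lc} with $u=v=e$, two of the three terms coincide and one vanishes, giving $\langle e\bullet e,w\rangle=\langle[w,e],e\rangle$ for every $w\in\g$; hence $e\bullet e=0$ is equivalent to $[w,e]\in e^\perp=\h$ for all $w$, i.e.\ $\ad_e(\g)\subset\h$. The stability of $\h$ under $\bullet$ amounts to $\langle u\bullet v,e\rangle=0$ for $u,v\in\{e,x\}$. The Koszul formula together with $\langle e,e\rangle=\langle e,x\rangle=0$ and $[e,x]\in\h\subset e^\perp$ makes the three cases $e\bullet e$, $e\bullet x$, $x\bullet e$ automatic, while $u=v=x$ yields $2\langle x\bullet x,e\rangle=2\langle[e,x],x\rangle=2b$. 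Thus stability of $\h$ collapses to the single scalar condition $b=0$.

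The case split is now transparent. In case (i), $\h$ abelian gives $[e,x]=0$, hence $a=b=0$; stability is automatic and the Kundt pair property reduces exactly to ``$\h$ degenerate'' together with $\ad_e(\g)\subset\h$. In case (ii), $\h$ non-abelian means $[\h,\h]=\R(ae+bx)$ is one-dimensional; stability forces $b=0$ and non-abelianity forces $a\neq 0$, giving $[\h,\h]=\R e=\h^\perp$. Conversely, $\h^\perp=[\h,\h]$ silently packages both the degeneracy of $\h$ and the relation $[e,x]\in\R e$, so combined with $\ad_e(\g)\subset\h$ it yields both Kundt axioms. No deep obstacle arises; the only point requiring care is checking that the three automatic components of stability rely exclusively on the Witt-basis pairings and use no bracket information, and that in (ii) the single equality $\h^\perp=[\h,\h]$ really does encode both the degeneracy and the normalisation $b=0$.
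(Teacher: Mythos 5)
Your argument is correct and follows essentially the same route as the paper: both reduce $e\bullet e=0$ (plus isotropy of $e$) to $\ad_e(\g)\subset\h$ via the Koszul formula, and both reduce stability of $\h$ under $\bullet$ to a single scalar bracket condition computed in an adapted basis of the $3$-dimensional space. The only cosmetic difference is that in the non-abelian case you adapt the basis to the metric first (Witt basis with $\h=e^\perp$, deriving $b=0$), whereas the paper adapts it to the bracket first (a basis $(u,v)$ of $\h$ with $[u,v]=u$, $\langle u,v\rangle=0$, deriving $\langle u,u\rangle=0$); the two computations are equivalent.
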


\begin{proof} Let $e$ a generator of $\h^\perp$. Then $(\h,\prs)$ is a Kundt pair if and only if $e\in\h^\perp$, $\langle e,e\rangle =0$, $e\bullet e=0$ and for any $u,v\in\h$,
	\begin{equation}\label{eq} 0=2\langle u\bullet v,e\rangle = \langle [e,u],v\rangle +\langle [e,v],u\rangle. \end{equation}
	Note first that $e\bullet e=0$ and $\langle e,e\rangle=0$ if and only if, for any $u\in\G$,
	\[ 0=\langle e\bullet e,x\rangle =\langle [u,e],e\rangle \]which is equivalent to 
	$\ad_e(\g)\subset\h$.
	
		If $\h$ is abelian then \eqref{eq} holds trivially. 
		
		Suppose now that $\dim\G=3$ and  $\h$ is not abelian. Then there exists a basis $(u,v)$ of $\h$ such $\langle u,v\rangle =0$ and  $[u,v]=u$. Put $e=au+bv$. Then from the relation above, we get
		\[ 0=-b\langle u,v\rangle +a\langle u,u\rangle=a\langle u,u\rangle\esp 0=\langle [e,u],u\rangle=b\langle u,u\rangle. \]
		This implies that $\langle u,u\rangle =0$ and hence $\h^\perp=[\h,\h]$. The converse is obviously true.

\end{proof}

	There are five simply connected three dimensional unimodular non abelian Lie groups: \begin{enumerate}
		
		\item The nilpotent Lie group $\nil$ known as Heisenberg group whose Lie algebra will be denoted by $\mathfrak{n}$. We have
		\[ \nil=\left\{\left(\begin{matrix} 1&x&z\\0&1&y\\0&0&1\end{matrix}\right),x,y,z\in\R   \right\}
		\esp \mathfrak{n}=\left\{\left(\begin{matrix} 0&x&z\\0&0&y\\0&0&0\end{matrix}\right),x,y,z\in\R   \right\}. \]
		The Lie algebra $\n$ has a basis $\B_0=(X_1,X_2,X_3)$ where
		\[ X_1=\left(\begin{matrix} 0&1&0\\0&0&0\\0&0&0\end{matrix}\right), X_2=\left(\begin{matrix} 0&0&0\\0&0&1\\0&0&0\end{matrix}\right)\esp X_3=\left(\begin{matrix} 0&0&1\\0&0&0\\0&0&0\end{matrix}\right) \]
		where   the non-vanishing Lie bracket is $[X_1,X_2]=X_3$. 
		\item $\mathrm{SU}(2)=\left\{\left(\begin{matrix} a+bi&-c+di\\c+di&a-bi\end{matrix}\right),a^2+b^2+c^2+d^2=1   \right\}
		\esp \mathfrak{su}(2)=\left\{\left(\begin{matrix} iz&y+ix\\-y+xi&-zi\end{matrix}\right),x,y,z\in\R   \right\}.$ The Lie algebra $\mathfrak{su}(2)$ has a basis $\B_0=(X_1,X_2,X_3)$
		\[ X_1=\frac12\left(\begin{matrix}0&i\\i&0\end{matrix} \right),\; X_2=\frac12\left(\begin{matrix}0&1\\-1&0\end{matrix} \right)\esp X_3=\frac12\left(\begin{matrix}-i&0\\0&i\end{matrix} \right) \]
	 where   the non-vanishing Lie brackets are
		\begin{equation*} \label{eqsu2}[X_1,X_2]=X_3,\;[X_2,X_3]=X_1\esp [X_3,X_1]=X_2. \end{equation*} 
		
		\item The universal covering group $\wi{\mathrm{PSL}}(2,\R)$ of $\mathrm{SL}(2,\R)$ whose Lie algebra  is $\mathrm{sl}(2,\R)$. 
		The Lie algebra $\mathrm{sl}(2,\R)$ has a basis $\B_0=(e,f,h)$ where
		$$e=\left(\begin{matrix}
		0&1\\0&0
		\end{matrix}  \right), \ f=\left(\begin{matrix}
		0&0\\1&0
		\end{matrix}  \right), \ h=\left(\begin{matrix}
		1&0\\0&-1
		\end{matrix}  \right).$$ where
		the non-vanishing Lie brackets are
		\begin{equation*} \label{eqsl}[e,f]=h,\;[h,e]=2e\esp [h,f]=-2f. \end{equation*} 
		\item The solvable Lie group $\sol=\left\{\left(\begin{matrix} e^x&0&y\\0&e^{-x}&z\\0&0&1\end{matrix}\right),x,y,z\in\R   \right\}$ whose Lie algebra is $\mathfrak{sol}=\left\{\left(\begin{matrix} x&0&y\\0&-x&z\\0&0&0\end{matrix}\right),x,y,z\in\R   \right\}$. The Lie algebra $\mathfrak{sol}$ has a basis $\B_0=(X_1,X_2,X_3)$ where
		\[ X_1=\left(\begin{matrix} 1&0&0\\0&-1&0\\0&0&0\end{matrix}\right),\;
		X_2=\left(\begin{matrix} 0&0&1\\0&0&0\\0&0&0\end{matrix}\right),\;
		X_3=\left(\begin{matrix} 0&0&0\\0&0&1\\0&0&0\end{matrix}\right)\esp  \]  where the non-vanishing Lie brackets are
		\begin{equation*} \label{eqsol} [X_1,X_2]=X_2\esp [X_1,X_3]=-X_3. \end{equation*}
		\item The universal covering group $\wi{\mathrm{E}_0}(2)$ of the Lie group 
		\[ \mathrm{E}_0(2)=\left\{\left(\begin{matrix} \cos(\theta)&\sin(\theta)&x\\-\sin(\theta)&\cos(\theta)&y\\0&0&1\end{matrix}\right),\theta,x,y\in\R   \right\}. \]Its Lie algebra is
		\[ \mathrm{e}_0(2)=\left\{\left(\begin{matrix} 0&\theta&x\\-\theta&0&y\\0&0&0\end{matrix}\right),\theta,y,z\in\R   \right\}.  \]The Lie algebra $\mathrm{e}_0(2)$ has a basis $\B_0=(X_1,X_2,X_3)$ where
		\[ X_1=\left(\begin{matrix} 0&-1&0\\1&0&0\\0&0&0\end{matrix}\right),\;
		X_2=\left(\begin{matrix} 0&0&1\\0&0&0\\0&0&0\end{matrix}\right),\;\esp
		X_3=\left(\begin{matrix} 0&0&0\\0&0&1\\0&0&0\end{matrix}\right) \]
		where  the non-vanishing Lie brackets are
		\begin{equation*} \label{eqe2} [X_1,X_2]=X_3\esp [X_1,X_3]=-X_2. \end{equation*}

	\end{enumerate}
	
	Let us find the 2-dimensional subalgebras of the 3-dimensional unimodular Lie algebras.
	
	\begin{pr}\label{class}
		\begin{enumerate}\item Let $\h$ be a 2-dimensional subalgebra of $\mathfrak{n}$. Then $\h=\mathrm{span}\{X_3,a X_1+b X_2 \}$, $(a,b)\not=(0,0)$.
			
			\item $\mathfrak{su}(2)$ has no subalgebra of dimension 2.

			\item Let $\h$ be a 2-dimensional subalgebra of $\mathfrak{sol}$ then either $\h=\mathrm{span}\{X_2,X_3\}$, $\h=\mathrm{span}\{X_2,X_1+aX_3\}$ or
			$\h=\mathrm{span}\{X_3,X_1+aX_2\}$ ($a\in\R)$
			
			\item Let $\h$ be a 2-dimensional subalgebra of $e_0(2)$ then $\h=\mathrm{span}\{X_2,X_3\}$.
			
			\item Let $\h$ be 2-dimensional subalgebra of $\mathrm{sl}(2,\R)$. Then there exists an automorphism of $\mathrm{sl}(2,\R)$ which sends $\h$ to 
			$\mathrm{span}\{h,e\}$.  
			
		\end{enumerate}

	\end{pr}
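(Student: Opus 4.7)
The statement splits into five independent cases that admit a parallel treatment via the subalgebra criterion $[\h,\h]\subseteq\h$. My plan is: for the three solvable algebras $\n$, $\mathfrak{sol}$ and $\mathrm{e}_0(2)$, exploit a distinguished two-dimensional abelian ideal and classify $\h$ by its intersection with that ideal; for the two semisimple algebras $\mathfrak{su}(2)$ and $\mathrm{sl}(2,\R)$, combine a centralizer computation with an eigenvalue analysis of $\ad_v$ on $\h$.

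For $\n$, any two vectors $u=\sum a_iX_i$ and $v=\sum b_iX_i$ in $\h$ satisfy $[u,v]=(a_1b_2-a_2b_1)X_3$. When this coefficient is nonzero, $X_3\in\h$; when it vanishes, the projections of $u,v$ onto $\mathrm{span}(X_1,X_2)$ are proportional, so by linear independence a suitable combination lies in $\R X_3$, again forcing $X_3\in\h$. Subtracting a multiple of $X_3$ from a complementary generator produces the asserted normal form. For $\mathfrak{sol}$ and $\mathrm{e}_0(2)$, the relevant ideal is $I=\mathrm{span}(X_2,X_3)$; either $\h=I$, or $\h\cap I=\R w$ and $\h$ is spanned by $w$ together with some $u=X_1+aX_2+bX_3$. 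Because $I$ is abelian, the bracket condition reduces to $\ad_{X_1}(w)\in\R w$. In $\mathfrak{sol}$, $\ad_{X_1}|_I$ is diagonal with eigenvalues $\pm1$, so $w\in\R X_2$ or $w\in\R X_3$, and absorbing the $X_2$-part (respectively the $X_3$-part) of $u$ into $w$ yields the two listed families. In $\mathrm{e}_0(2)$, $\ad_{X_1}|_I$ acts as a rotation with eigenvalues $\pm i$, which admits no real eigenvector, so only $\h=I$ survives.

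For $\mathfrak{su}(2)$, a direct bracket computation shows that the centralizer of any nonzero element is one-dimensional, ruling out a two-dimensional abelian $\h$. A hypothetical non-abelian $\h$ would contain $v,u$ with $[v,u]=u$, giving $\ad_v$ the real nonzero eigenvalue $1$; but $\ad_v$ is skew-symmetric with respect to the negative-definite Killing form and hence has purely imaginary spectrum, a contradiction. The same centralizer argument excludes abelian two-dimensional subalgebras of $\mathrm{sl}(2,\R)$, so $\h$ admits a basis $(v,u)$ with $[v,u]=u$. Now $\ad_v$ has real nonzero eigenvalue $1$, which in $\mathrm{sl}(2,\R)$ forces $v$ to be hyperbolic semisimple with eigenvalues $\pm\tfrac{1}{2}$; by the transitivity of the $\mathrm{SL}(2,\R)$-conjugation action on such elements, an inner automorphism sends $v$ to $\tfrac{1}{2}h$. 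The $+1$-eigenspace of $\ad_{h/2}$ is $\R e$, so $u\in\R e$ and $\h=\mathrm{span}(h,e)$.

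The main obstacle is the $\mathrm{sl}(2,\R)$ step: it requires the centralizer computation to force non-abelianness, the Jordan-type fact that only hyperbolic semisimple elements have real nonzero $\ad$-eigenvalues in $\mathrm{sl}(2,\R)$, and the conjugacy of all such elements to $\tfrac{1}{2}h$ under inner automorphisms. The other four cases reduce to straightforward bracket bookkeeping once the correct abelian ideal or invariant form is identified.
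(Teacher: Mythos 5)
Your proof is correct, and for items (1), (3) and (4) it is essentially the argument of the paper: the paper also reduces $\mathfrak{sol}$ and $\mathrm{e}_0(2)$ to the condition that $\h\cap\mathrm{span}\{X_2,X_3\}$ be $\ad_{X_1}$-invariant (phrased as $[X_1+U,V]$ proportional to $V$, giving $ab=0$, resp.\ $a^2+b^2=0$), and for $\mathfrak{n}$ it simply notes that $\h$ must contain the center. The two places where you diverge are (2) and (5). For $\mathfrak{su}(2)$ the paper invokes a general lemma (Lemma \ref{compact}): a compact semisimple Lie algebra has no codimension-one subalgebra, because $\ad_x$ is skew-symmetric for a bi-invariant definite product and so would preserve the line $\h^\perp$, producing a central element; your version (one-dimensional centralizers plus purely imaginary spectrum of $\ad_v$) uses the same skew-symmetry but is specific to dimension $3$ and splits into abelian/non-abelian cases. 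For $\mathrm{sl}(2,\R)$ the paper stays intrinsic: starting from $[u,v]=2v$ it uses $\tr(\ad_u)=0$ to produce a third eigenvector $w$ with $[u,w]=-2w$, checks $[v,w]=\al u$, rescales, and defines the automorphism by $(u,v,w)\mapsto(h,e,f)$; you instead pass to the $2\times2$ matrix realization, identify $v$ as hyperbolic with eigenvalues $\pm\tfrac12$, and conjugate it to $\tfrac12 h$. Both work; the paper's triple construction avoids appealing to the conjugacy classification, while your route has the merit of explicitly ruling out the abelian case and the vanishing of the structure constant $\al$, two points the paper's proof leaves implicit.
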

	
	\begin{proof} \begin{enumerate}\item A 2-dimensional subalgebra $\h$ of $\mathfrak{n}$ must be abelian and contains the center. So $\h=\mathrm{span}\{X_3,a X_1+b X_2 \}$ and $(a,b)\not=(0,0)$.
		\item It is a consequence of Lemma \ref{compact}.
		\item Denote by $\h_0=\mathrm{span}\{X_2,X_3 \}$. If $\h$ is abelian and $\h\not=\h_0$ then $\h =\mathrm{span}\{ X_1+U,V \}$ where $U,V\in\h_0$  hence $[X_1,V]=0$ which is impossible. So if $\h$ is abelian then $\h=\h_0$. 
		
		 If $\h$ is not abelian then  $\h\not=\h_0$. Thus $\h =\mathrm{span}\{ X_1+U,V \}$ where $U,V=aX_2+bX_3\in\h_0$ and $[\h,\h]=\R V$. Now
		 \[ [X_1+U,V] = aX_2-b X_3. \]	So the vectors $aX_2+bX_3,aX_2-bX_3$ must be linearly dependent, hence $ab=0$. Which complete the proof.
		 \item We can use the same argument as above and get $a^2+b^2=0$.
		 \item Note first that $[e,f]=h, \ [h,e]=2e, \ [h,f]=-2f$. Let $\h$ be a 2-dimensional subalgebra of $\mathrm{sl}(2,\R)$. Then there exists a basis $(u,v)$ of $\h$ such that $[u,v]=2v$. The endomorphism $\ad_u$ is skew-symmetric with respect to the Killing form hence $\tr(\ad_u)=0$. It has $2$ and $0$ as eigenvalues so the third eigenvalue is $-2$. So there exists $w\in\mathrm{sl}(2,\R)$ such that $[u,w]=-2w$. Now
		 \[ [u,[v,w]]=2[v,w]-2[v,w]=0 \]  hence $[v,w]=\al u$. By replacing $w$ by $\frac1\al w$ we get that the automorphism $\phi$ which sends $(u,v,w)$ to $(h,e,f)$ sends $\h$ to $\mathrm{span}\{h,e\}$ which completes the proof.

			\end{enumerate}
		
	\end{proof}
	
	\begin{theorem}
		Let $(\h,\prs)$ be a Kundt pair of $\mathfrak{n}$. Then $(\h,\langle,\rangle)$ is equivalent to $(\h_0,\langle,\rangle_0)$ where either:
		\begin{enumerate}\item $\langle,\rangle_0=\left[ \begin {array}{ccc} 1 &0&0\\ \noalign{\medskip}0&-1&0
			\\ \noalign{\medskip}0&0&\mu\end {array} \right], \mu >0$  and $\h_0=\mathrm{span}\{X_1\pm X_2,X_3\}$.
			
			\item $ \langle,\rangle_0=\left[\begin{array}{ccc}1&0&0\\0&0&1\\0&1&0  \end{array}\right]$ and $\h_0=\mathrm{span}\{X_1,X_3\}$.
		\end{enumerate}
	\end{theorem}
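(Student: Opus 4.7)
My strategy is to combine Propositions \ref{class}(1) and \ref{dim}(i) with a careful dichotomy. By Proposition \ref{class}(1), every 2-dimensional subalgebra of $\n$ has the form $\h=\mathrm{span}\{X_3,aX_1+bX_2\}$, hence is abelian and contains the center $\R X_3 = [\n,\n]$. Proposition \ref{dim}(i) then reduces the Kundt condition on $(\h,\prs)$ to $\prs$-degeneracy of $\h$, since the side requirement $\ad_e(\n)\subset \h$ is automatic. Let $e$ span $\h^\perp$; degeneracy gives $e\in\h$, and the Gram matrix of $\prs|_\h$ in the basis $(e,X_3)$ is $\mathrm{diag}(0,\langle X_3,X_3\rangle)$. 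Since a Lorentz 3-space admits no totally isotropic 2-plane, one has a clean dichotomy: either $\langle X_3,X_3\rangle=0$ and $\h^\perp=\R X_3$ (central case), or $\langle X_3,X_3\rangle\neq 0$ with $e,X_3$ linearly independent (non-central case).

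\emph{Non-central case.} Set $\mu=\langle X_3,X_3\rangle$. The null vector $e$ lies in $(\R X_3)^\perp$, so this orthogonal 2-plane cannot be positive definite; hence $\mu>0$ and $(\R X_3)^\perp$ carries signature $(1,1)$. Pick an orthonormal basis $(Y_1,Y_2)$ of $(\R X_3)^\perp$ with $\langle Y_1,Y_1\rangle=1$ and $\langle Y_2,Y_2\rangle=-1$. Then $[Y_1,Y_2]=dX_3$ for some $d\neq 0$ (the $Y_i$ are linearly independent modulo $\R X_3$ because they are perpendicular to the non-null $X_3$), so the map $\phi(X_1)=Y_1$, $\phi(X_2)=Y_2$, $\phi(X_3)=dX_3$ is an automorphism of $\n$ and $\phi^*\prs=\mathrm{diag}(1,-1,d^2\mu)$. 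Writing $e=\alpha Y_1+\beta Y_2$ with $\alpha^2=\beta^2$ (null condition), and possibly composing with $X_2\mapsto -X_2$, $X_3\mapsto -X_3$ (which preserves the diagonal metric), one finds that $\h$ corresponds to $\mathrm{span}\{X_3,X_1+X_2\}$, producing the first standard form with parameter $\mu':=d^2\mu>0$.

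\emph{Central case.} Using the $\mathrm{GL}(2,\R)$-action of $\mathrm{Aut}(\n)$ on $\n/\R X_3$, I first bring $\h$ to $\mathrm{span}\{X_1,X_3\}$. The condition $\h^\perp=\R X_3$ then forces $\langle X_1,X_3\rangle=\langle X_3,X_3\rangle=0$, so the Gram matrix reads $G=\bigl(\begin{smallmatrix} p & q & 0\\ q & s & t\\ 0 & t & 0\end{smallmatrix}\bigr)$ with $\det G=-pt^2$; Lorentz non-degeneracy gives $p,t\neq 0$, and signature $(2,1)$ forces $p>0$. Apply the $\h$-preserving automorphism $X_2\mapsto X_2+\alpha X_1+\beta X_3$ (which respects the Heisenberg bracket) with $\alpha=-q/p$ to kill $\langle X_1,X_2\rangle$ and $\beta$ chosen to kill $\langle X_2,X_2\rangle$; then rescale $X_1\mapsto\lambda X_1$, $X_2\mapsto\kappa X_2$, $X_3\mapsto\lambda\kappa X_3$ (with signs tuned so that $\lambda t>0$) to reach $p=t=1$, yielding the second standard form.

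The principal difficulty is constructing, in each case, a single automorphism normalizing both $\h$ and $\prs$ simultaneously. The non-central case exploits the fact that an orthonormal basis of $(\R X_3)^\perp$ extends automatically to a Heisenberg-compatible triple via $Y_3:=[Y_1,Y_2]$, so the algebraic and metric normalizations come from the same data. The central case instead proceeds in two stages: first normalize the subalgebra, then use the $\h$-stabilizing subgroup of $\mathrm{Aut}(\n)$ to clean up the remaining metric entries. A final check that distinct values $\mu>0$ give inequivalent Kundt pairs—based on the observation that the stabilizer of $\mathrm{span}\{X_3,X_1+X_2\}$ in $\mathrm{Aut}(\n)$ acts on $\langle X_3,X_3\rangle$ only by unit scalars—completes the classification.
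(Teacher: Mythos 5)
Your proof is correct, but it takes a genuinely different route from the paper's. The paper invokes the prior classification (\cite[Theorem 3.1]{chakkar}) of Lorentzian scalar products on $\n$ up to automorphism, reducing to the three metric normal forms $\mathrm{diag}(1,-1,\mu)$, $\mathrm{diag}(1,1,-\mu)$ and the off-diagonal one, and then simply checks, for each, which subalgebras $\mathrm{span}\{X_3,aX_1+bX_2\}$ are degenerate (the condition $\ad_e(\n)\subset\h$ being automatic since $[\n,\n]\subset\h$, as you also note). You instead give a self-contained normalization: your dichotomy on whether the center $\R X_3$ is null is exactly the invariant separating the two families; your signature argument excluding $\langle X_3,X_3\rangle<0$ plays the role of the paper's remark that $\h_0$ cannot be $\mathrm{diag}(1,1,-\mu)$-degenerate; and your explicit automorphisms (the orthonormal frame $(Y_1,Y_2)$ of $(\R X_3)^\perp$ completed via $[Y_1,Y_2]=dX_3$ in the non-central case, the shear $X_2\mapsto X_2+\alpha X_1+\beta X_3$ followed by a rescaling in the central case) do the work that the citation does for the paper. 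Both arguments are sound; what yours buys is independence from the companion paper, plus two points the paper leaves implicit in the cited classification, namely that distinct $\mu>0$ give inequivalent pairs and that the two subalgebras $\mathrm{span}\{X_1\pm X_2,X_3\}$ are interchanged by a metric-preserving automorphism. What the paper's metric-first strategy buys is brevity and uniformity, since the same scheme is reused for $\mathfrak{sol}$ and $e_0(2)$.
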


	\begin{proof} Let $(\h,\prs)$ be a Kundt pair on $\mathfrak{n}$. According to \cite[Theorem 3.1]{chakkar}, there exists an automorphism $\phi$ of $\mathfrak{n}$ such that the matrix of $(\phi^{-1})^*(\prs)$ in the basis $(X_1,X_2,X_3)$ has one of the following forms:
	\[ \n_1=\left[ \begin {array}{ccc} 1 &0&0\\ \noalign{\medskip}0&-1&0
	\\ \noalign{\medskip}0&0&\mu\end {array} \right],\;
	\n_2=\left[ \begin {array}{ccc} 1 &0&0\\ \noalign{\medskip}0&1&0
	\\ \noalign{\medskip}0&0&-\mu\end {array} \right]  \ \text{or}  \ \n_3=\left[ \begin {array}{ccc} 1 &0&0\\ \noalign{\medskip}0&0&1
	\\ \noalign{\medskip}0&1&0\end {array} \right],\quad \mu>0. \]	 
		By virtue of Proposition \ref{class}, $\phi(\h)=\h_0=\mathrm{span}\{ X_3,aX_1+bX_2  \}$ with $(a,b)\not=(0,0)$. According to Proposition \ref{dim}, $(\phi(\h),(\phi^{-1})^*(\prs))$ is a Kundt pair if and only if $\h_0$ is degenerate and $\ad_{e}(\n)\subset\h_0$ where $e$ is a generator of $\h_0^\perp$. Since $[\n,\n]\subset\h_0$ then the last condition holds.
		
		Now, $\h_0$ cannot be $\n_2$-degenerate and it is $\n_3$-degenerate if and only if $b=0$. Finally, $\h_0$ is $\n_1$-degenerate if and only if $a^2-b^2=0$ which completes the proof.
	\end{proof}

\begin{theorem} Let $(\h,\prs)$ be a Kundt pair of $\mathfrak{sol}$. Then $(\h,\langle,\rangle)$ is equivalent to $(\h_0,\prs_0)$ where either:
	\begin{enumerate}\item  $\prs_0=\left(\begin{matrix}
		\la&0&0\\0&0&-1\\
		0&-1&0&
		\end{matrix}  \right), \la>0$ and $\h_0=\mathrm{span}\{X_2,X_1\}$ or $\h_0=\mathrm{span}\{X_3,X_1\}$,
		
		\item $\prs_0=\left(\begin{matrix}
		\la^2&0&0\\0&\la&1\\
		0&1&0
		\end{matrix}  \right)$ and $\h_0=\mathrm{span}\{X_3,X_1\}$,
	\item $\prs_0=\left(\begin{matrix}
	0&0&-\frac{2}{b}\\0&1&1\\
	-\frac{2}{b}&1&1
	\end{matrix}  \right),\quad b>0$ and $\h_0=	\mathrm{span}\{X_2,X_3\}$.

	\item  $ \langle,\rangle_0=\left[\begin{array}{ccc}0&0&1\\0&1&0\\1&0&0  \end{array}\right]$ and $\h_0=\mathrm{span}\{X_2,X_3\}$.
	
	\end{enumerate}
\end{theorem}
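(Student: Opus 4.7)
The plan is to follow the same scheme as the preceding theorem for $\mathfrak{n}$: enumerate the two-dimensional subalgebras of $\mathfrak{sol}$ via Proposition~\ref{class}$(3)$, extract the Kundt constraint through Proposition~\ref{dim}, and normalise the resulting Lorentzian form by the subgroup of $\mathrm{Aut}(\mathfrak{sol})$ stabilising the chosen subalgebra.

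The first step is to describe $\mathrm{Aut}(\mathfrak{sol})$. Since $[\mathfrak{sol},\mathfrak{sol}]=\mathrm{span}\{X_2,X_3\}$ is characteristic, every automorphism preserves it, and matching the $\pm 1$ eigenlines of $\ad_{X_1}$ yields two components: a \emph{diagonal} one with $\phi(X_1)=X_1+eX_2+fX_3$, $\phi(X_2)=aX_2$, $\phi(X_3)=dX_3$ (with $ad\neq 0$), and an \emph{anti-diagonal} one obtained by composing with $X_1\mapsto -X_1$, $X_2\leftrightarrow X_3$. The diagonal action transforms the parameter of Proposition~\ref{class}$(3)$ as $a_0\mapsto f+d\,a_0$, so the two non-abelian families reduce to the canonical representatives $\mathrm{span}\{X_1,X_2\}$ and $\mathrm{span}\{X_1,X_3\}$, which are themselves interchanged by the anti-diagonal component; hence only two subalgebras need to be considered.

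For $\h=\mathrm{span}\{X_2,X_3\}$ (abelian, an ideal), Proposition~\ref{dim}$(i)$ reduces the Kundt property to $\prs|_\h$ being degenerate of rank $1$ (rank $0$ is forbidden in a $3$-dimensional Lorentzian space). I then split on the position of the null line $\ell=\ker(\prs|_\h)\subset\h$. If $\ell=\R X_2$ or $\R X_3$, the diagonal stabiliser (composed with the anti-diagonal one to swap $X_2$ and $X_3$ if needed) lets one kill every residual entry and produces the rigid form of Case~$4$. If $\ell$ is transverse to both eigenlines, a rescaling brings $\prs|_\h$ to $\bigl(\begin{smallmatrix}1 & 1\\1 & 1\end{smallmatrix}\bigr)$, and the residual freedom $a=d=\pm 1$ together with free $e,f$ leaves exactly one invariant, namely the difference $g_{13}-g_{12}$; after killing $g_{12}$ and $g_{11}$ this invariant is renormalised to $-2/b$ with $b>0$, yielding Case~$3$.

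For $\h=\mathrm{span}\{X_1,X_2\}$ (non-abelian, with $[\h,\h]=\R X_2$), Proposition~\ref{dim}$(ii)$ imposes the explicit constraints $g_{12}=g_{22}=0$, $g_{23}\neq 0$, while $\ad_{X_2}(\mathfrak{sol})\subset\h$ holds automatically. The sub-family of diagonal automorphisms with $f=0$ preserves $\h$, and I use it to kill $g_{13}$ and normalise $g_{23}$. If $g_{33}=0$, the only residual invariant is $g_{11}=\la$, forced to be positive by the Lorentzian signature, which gives Case~$1$ with $\h_0=\mathrm{span}\{X_1,X_2\}$. If $g_{33}\neq 0$, applying the anti-diagonal automorphism transports the pair to $\h=\mathrm{span}\{X_1,X_3\}$ with $g_{22}\neq 0$ and $g_{33}=0$, and normalising the stabiliser of this new subalgebra produces Case~$2$ with modulus $\la$. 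The dual listing $\h_0=\mathrm{span}\{X_2,X_1\}$ or $\mathrm{span}\{X_3,X_1\}$ in Case~$1$ is explained by the fact that the anti-diagonal involution preserves the specific $\prs_0$ of Case~$1$ while swapping the two admissible Kundt subalgebras. The main obstacle will be the abelian case, where a $5$-parameter variety of degenerate Lorentzian products must be reduced modulo a $4$-parameter stabiliser, and carefully tracking when $\prs$ fails to be non-degenerate is what cleanly separates the rigid Case~$4$ from the one-parameter Case~$3$.
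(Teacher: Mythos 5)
Your proposal is correct, but it follows a genuinely different route from the paper. The paper does not compute $\mathrm{Aut}(\mathfrak{sol})$ or any stabilizers at all: it imports wholesale the classification of left invariant Lorentzian metrics on $\mathfrak{sol}$ up to automorphism from \cite[Theorem 3.4]{chakkar} (seven normal forms $\mathrm{sol}_1,\dots,\mathrm{sol}_7$), then runs through the list of $2$-dimensional subalgebras from Proposition \ref{class} and simply checks, for each normal form, which subalgebras satisfy the degeneracy and orthogonality conditions of Proposition \ref{dim}; the surviving combinations are exactly $\mathrm{sol}_2$ with $u=0$ (your Cases 1--2 after relabelling), $\mathrm{sol}_6$, $\mathrm{sol}_5$ and $\mathrm{sol}_7$. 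You instead normalize the subalgebra first and then reduce the metric modulo the stabilizer of that subalgebra in $\mathrm{Aut}(\mathfrak{sol})$, which you compute from scratch. What your approach buys is self-containedness (no reliance on the external moduli classification) and a transparent explanation of structural features the paper leaves implicit — e.g.\ why Case 1 lists two subalgebras (the anti-diagonal involution fixes that metric while swapping them), and why the abelian case splits into a rigid stratum (kernel on an eigenline of $\ad_{X_1}$) and a one-parameter family (kernel transverse), a dichotomy that is indeed stabilizer-invariant. What the paper's route buys is brevity and a guarantee of exhaustiveness inherited from \cite{chakkar}. Two small imprecisions in your write-up, neither fatal: in Case 3 the quantity $g_{13}-g_{12}$ is only invariant up to the sign $a=d=\pm1$, so the true modulus is $|g_{13}-g_{12}|$, and the normalization to $-2/b$ with $b>0$ silently uses that sign freedom; and in the non-abelian branch you should note explicitly that the sub-case $g_{33}=0$ versus $g_{33}\neq0$ (equivalently $g_{22}$ after transport) is itself preserved by the stabilizer, since $g_{33}\mapsto d^2g_{33}$, so the two sub-cases cannot mix.
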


\begin{proof} Let $(\h,\prs)$ be a Kundt pair on $\mathfrak{sol}$. Then according to \cite[Theorem 3.4]{chakkar}, there exists an automorphism $\phi$ of $\mathfrak{sol}$ such that the matrix of $(\phi^{-1})^*(\prs)$ in the basis $(X_1,X_2,X_3)$ has one of the following forms:
	$$
	\begin{cases}
		\mathrm{sol}_1=\left(\begin{matrix}
			\frac4{u^2-v^2}&0&0\\0&1&\frac{u}{v}\\
			0&\frac{u}{v}&1
		\end{matrix}  \right), v>0, u<v,\;\mathrm{sol}_2=\left(\begin{matrix}
		\frac4{v^2-u^2}&0&0\\0&\frac{u}{v}&-1\\
		0&-1&\frac{u}{v}&
		\end{matrix}  \right), v>0, u<v,\; \mathrm{sol}_3=\left(\begin{matrix}
		\frac1{u+v}&0&0\\0&-\frac{v}{u}&1\\
		0&1&1
		\end{matrix}  \right),\;u>0, v>0,\\
			\mathrm{sol}_4=\left(\begin{matrix}
			\frac1{u}&0&0\\0&-1&0\\
			0&0&1
			\end{matrix}  \right),\;u>0,\;
			
			\mathrm{sol}_5=\left(\begin{matrix}
			0&0&-\frac{2}{b}\\0&1&1\\
			-\frac{2}{b}&1&1
			\end{matrix}  \right),\quad b>0,\;
			
			\mathrm{sol}_6=\left(\begin{matrix}
			\la^2&0&0\\0&\la&1\\
			0&1&0
			\end{matrix}  \right),\quad \la\not=0,\;\mathrm{sol}_7=\left(\begin{matrix}
			0&0&1\\0&1&0\\1&0&0
			\end{matrix}  \right).
			\end{cases}
			$$

	By virtue of Proposition \ref{class}, $\phi(\h)=\h_0$ where either $\h_0=\mathrm{span}\{X_2,X_3\}$, $\h_0=\mathrm{span}\{X_2,X_1+aX_3\}$ or
	$\h_0=\mathrm{span}\{X_3,X_1+aX_2\}$ ($a\in\R)$. 
	
	If $\h_0=\mathrm{span}\{X_2,X_3\}$ then it is abelian and it is $\mathrm{sol}_5$-degenerate and $\mathrm{sol}_7$-degenerate. For $\mathrm{sol}_5$, $\h_0^\perp=\R(X_2-X_3)$ and we have $\ad_{X_2-X_3}(\mathfrak{sol})\subset \h_0$. We have the same situation for $\mathrm{sol}_7$. Thus $(\h_0,\mathrm{sol}_5)$ and 
	$(\h_0,\mathrm{sol}_7)$ are Kundt pairs.
	
	If $\h_0=\mathrm{span}\{X_2,X_1+aX_3\}$ then $[\h_0,\h_0]=\R X_2$. We have obviously, $\ad_{X_2}(\mathfrak{sol})\subset\h_0$ and, according to Proposition \ref{dim}, $(\h_0,\prs)$ is a Kundt pair if and only if $\langle X_2,X_2\rangle=\langle X_2,X_1+aX_3\rangle =0$. This is possible if and only if $\prs=\mathrm{sol}_2$ with $u=0$ and $a=0$.
	
	If $\h_0=\mathrm{span}\{X_3,X_1+aX_2\}$ then $[\h_0,\h_0]=\R X_3$. We have obviously, $\ad_{X_3}(\mathfrak{sol})\subset\h_0$ and, according to Proposition \ref{dim}, $(\h_0,\prs)$ is a Kundt pair if and only if $\langle X_3,X_3\rangle=\langle X_3,X_1+aX_2\rangle =0$. This is possible if and only if $\prs=\mathrm{sol}_2$, $u=0$ and $a=0$ or $\prs=\mathrm{sol}_6$ and $a=0$.
	\end{proof}

\begin{theorem} Let $(\h,\prs)$ be a Kundt structure on $e_0(2)$. Then $(\h,\prs)$ is equivalent to $(\h_0,\prs_0)$ where\\
	$\prs_0=\left[ \begin {array}{ccc} 0&1&0
	\\ \noalign{\medskip}1&0&0\\ \noalign{\medskip}0&0&\mu\end {array}
	\right], \mu >0$ and $\h_0=\mathrm{span}\{X_2,X_3\}$.
\end{theorem}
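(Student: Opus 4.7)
The plan is to combine Proposition \ref{class}(4) with Proposition \ref{dim}(i) to reduce the question to a linear-algebra problem, and then to carry out that normalization using the explicit description of $\mathrm{Aut}(e_0(2))$. By Proposition \ref{class}(4) the only $2$-dimensional subalgebra of $e_0(2)$ is $\h_0=\mathrm{span}\{X_2,X_3\}$, which is abelian; since $\h_0=[e_0(2),e_0(2)]$, the condition $\ad_e(e_0(2))\subset\h_0$ of Proposition \ref{dim}(i) is automatic for any $e\in\h_0^\perp$. Thus $(\h_0,\prs)$ is a Kundt pair exactly when $\prs|_{\h_0}$ is degenerate, and the problem reduces to classifying, up to $\mathrm{Aut}(e_0(2))$, the Lorentzian products on $e_0(2)$ for which $\h_0$ is degenerate.

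From the brackets $[X_1,X_2]=X_3$ and $[X_1,X_3]=-X_2$, a direct verification gives that every $\phi\in\mathrm{Aut}(e_0(2))$ has the form $\phi(X_1)=\epsilon X_1+bX_2+cX_3$, $\phi(X_2)=dX_2+eX_3$, $\phi(X_3)=\epsilon(-eX_2+dX_3)$ with $\epsilon=\pm1$ and $d^2+e^2>0$. Writing the Gram matrix of $\prs$ in the basis $(X_1,X_2,X_3)$ as $\left[\begin{array}{ccc} A & B & C \\ B & p & q \\ C & q & r\end{array}\right]$ with $pr-q^2=0$, and using the fact that a degenerate $2$-plane in a Lorentzian $3$-space is automatically positive semi-definite, the restriction $\prs|_{\h_0}$ equals $(sy+tz)^2$ for suitable $s,t$ not both zero. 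I would then normalize in three steps: (i) a rotation-scaling in $(X_2,X_3)$ sends the null line of $\prs|_{\h_0}$ onto $\R X_2$, yielding $p=q=0$ and $r>0$; (ii) choosing $b,c$ in $\phi(X_1)$ kills both $\langle X_1,X_1\rangle$ and $\langle X_1,X_3\rangle$ while leaving $\langle X_1,X_2\rangle$ intact; (iii) a uniform scaling of $(X_2,X_3)$ rescales $\langle X_1,X_2\rangle$ to $1$ and leaves $\langle X_3,X_3\rangle$ as the positive parameter $\mu$.

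The main subtlety will be checking that after step (i) the entry $\langle X_1,X_2\rangle$ is nonzero, which is needed in steps (ii) and (iii); this follows from $\dim\h_0^\perp=1$, because $X_2\in\h_0^\perp$ prevents $X_1$ from also lying in $\h_0^\perp$. It will also be necessary to verify that $\mu$ is a genuine modulus, that is, no further automorphism preserving the normal form can change its value; a short computation shows that compatibility forces $\epsilon d=1$ and $e=0$, hence $d=\pm1$, so $\mu$ is indeed invariant.
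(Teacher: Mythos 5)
Your proof is correct, but it follows a genuinely different route from the paper's. The paper invokes the classification of left invariant Lorentzian metrics on $e_0(2)$ up to automorphism from \cite[Theorem 3.5]{chakkar}, which provides three normal forms $\prs_1,\prs_2,\prs_3$ for the Gram matrix, and then simply checks for which of these the unique $2$-dimensional subalgebra $\h_0=\mathrm{span}\{X_2,X_3\}$ is degenerate (only $\prs_3$ works, giving the stated form). You instead bypass that external classification entirely: you impose the degeneracy of $\prs|_{\h_0}$ from the outset, compute $\mathrm{Aut}(e_0(2))$ explicitly, and normalize the constrained Gram matrix directly. Your computation of the automorphism group is right ($a^2=1$ is forced by $[X_1,[X_1,\cdot]]=-\mathrm{id}$ on the derived algebra, and the induced action on $\mathrm{span}\{X_2,X_3\}$ is by similarities), the three normalization steps go through, and your observation that $\langle X_1,X_2\rangle\neq 0$ after step (i) is confirmed by $\det\prs=-\langle X_1,X_2\rangle^2\langle X_3,X_3\rangle$; the cleanest justification is that $\langle X_1,X_2\rangle=0$ would put $X_2$ in the radical of $\prs$, contradicting non-degeneracy (your phrasing about $X_1$ lying in $\h_0^\perp$ is slightly off, though the conclusion stands). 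What your approach buys is self-containment and an explicit verification that $\mu$ is a genuine modulus via the stabilizer computation ($e=0$, $\epsilon d=1$, hence $d^2=1$), a point the paper leaves implicit in its reliance on the cited normal forms; what the paper's approach buys is brevity and uniformity with the treatment of the other unimodular Lie algebras, all handled by the same citation.
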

\begin{proof}Let $(\h,\prs)$ be a Kundt pair on $e_0(2)$. Then according to \cite[Theorem 3.5]{chakkar}, there exists an automorphism $\phi$ of $e_0(2)$ such that the matrix of $(\phi^{-1})^*(\prs)$ in the basis $(X_1,X_2,X_3)$ has one of the following forms:
	\[ \prs_1=\left(\begin{matrix}
	0&1&0\\1&u&0\\
	0&0&v
	\end{matrix}  \right),\;u>0,v>0,\;\prs_2=\left(\begin{matrix}
	0&1&0\\1&u&0\\
	0&0&v
	\end{matrix}  \right),\;u>0,v>0,\;\prs_3=\left(\begin{matrix}
	0&1&0\\1&0&0\\
	0&0&u
	\end{matrix}  \right), \quad u>0. \]
		 
	By virtue of Proposition \ref{class}, $\phi(\h)=\h_0=\mathrm{span}\{ X_2,X_3  \}$. According to Proposition \ref{dim}, $(\phi(\h),(\phi^{-1})^*(\prs))$ is a Kundt pair if and only if $\h_0$ is degenerate and $\ad_{e}(\n)\subset\h_0$ where $e$ is a generator of $\h_0^\perp$. 
	Now, $\h_0$ cannot be $\prs_1$-degenerate neither $\prs_2$-degenerate. Finally, $\h_0$ is $\prs_3$-degenerate,  $\h_0^\perp=\R X_2$ and $\ad_{X_2}(e_0(2))\subset\h_0$.
		\end{proof}

\begin{theorem} Let $(\h,\prs)$ be a Kundt pair on $\mathrm{sl}(2,\R)$. Then there exists an automorphism $\phi$ of $\mathrm{sl}(2,\R)$ such that $\phi(\h)=\mathrm{span}\{e,h\}$ and the matrix of $\phi^*(\prs)$ in the  basis $(e,f,h)$ has one of the following forms:
	
$$\left[ \begin {array}{ccc} 0&4\,\alpha&0\\ \noalign{\medskip}4\,
	\alpha&0&0\\ \noalign{\medskip}0&0&8\,\beta\end {array} \right],\;
	\left[ \begin {array}{ccc} 0&4\,\alpha&0\\ \noalign{\medskip}4\,
	\alpha&1&0\\ \noalign{\medskip}0&0&8\,\beta\end {array} \right]\quad\mbox{or}\quad
	 \left[ \begin {array}{ccc} 0&4\,\alpha&0\\ \noalign{\medskip}4\,
	 \alpha&0&2\,\sqrt {2}\\ \noalign{\medskip}0&2\,\sqrt {2}&8\,\alpha
	 \end {array} \right], \ \beta>0, \ \alpha \in \mathbb{R}^*$$

	\end{theorem}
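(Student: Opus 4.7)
The plan is to reduce to $\h=\mathrm{span}\{e,h\}$ via Proposition~\ref{class}(5), translate the Kundt condition into explicit constraints on the matrix of $\prs$ using Proposition~\ref{dim}(ii), and then classify these matrices under the subgroup of $\mathrm{Aut}(\mathrm{sl}(2,\R))$ stabilizing $\h$. After this reduction we have $[\h,\h]=\R[h,e]=\R e$, so Proposition~\ref{dim}(ii) says that $(\h,\prs)$ is Kundt if and only if $\h^\perp=\R e$ together with $\ad_e(\mathrm{sl}(2,\R))\subset\h$; the latter inclusion is automatic because $\ad_e(f)=h\in\h$ and $\ad_e(h)=-2e\in\h$. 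The former amounts to $\langle e,e\rangle=\langle e,h\rangle=0$ and $\langle e,f\rangle\ne 0$, so the matrix of $\prs$ in the basis $(e,f,h)$ takes the form
\[
M=\begin{pmatrix} 0 & a & 0 \\ a & b & c \\ 0 & c & d \end{pmatrix},\qquad a\ne 0,
\]
with $d>0$ forced by the Lorentzian signature.

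We then identify the subgroup of $\mathrm{Aut}(\mathrm{sl}(2,\R))\cong \PGL(2,\R)$ preserving the upper Borel $\h$: it is generated by the diagonal automorphisms $\Ad(\mathrm{diag}(t,1/t))$, by the standard unipotent automorphisms $\Ad(u_s)$, and by the involution $\Ad(\mathrm{diag}(1,-1))$. A direct pullback computation shows that these act on the quadruple $(a,b,c,d)$ as follows: the diagonal sends $(a,b,c,d)\mapsto (a,\,b/t^4,\,c/t^2,\,d)$; the unipotent sends $(a,b,c,d)\mapsto (a,\,b+s^2(d-2a)+2cs,\,c+s(d-2a),\,d)$; the involution sends $c\mapsto -c$ while fixing the other entries. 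Hence $a$ and $d$ are strict invariants and the quantity $\Delta:=b(d-2a)-c^2$ is invariant up to the positive rescaling $\Delta\mapsto \Delta/t^4$.

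The case analysis is then driven by whether $d-2a$ vanishes. When $d-2a\ne 0$, picking $s=-c/(d-2a)$ in the unipotent action forces $c=0$; the resulting $b$ is then either $0$ (yielding the first form) or rescaled by the diagonal to a unit (yielding the second form), with the reparametrization $a=4\alpha$, $d=8\beta$ and $\beta>0$ dictated by the signature. When $d-2a=0$, the unipotent fixes $c$ but shifts $b$ by $2cs$: in the generic sub-case $c\ne 0$ we kill $b$ by choosing $s$ appropriately, then normalize $|c|$ to $2\sqrt{2}$ by the diagonal and fix its sign by the involution, producing the third form with $d=2a$; the remaining sub-case $c=0$ collapses back into one of the first two forms.

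The main obstacle is the systematic bookkeeping of this case analysis together with the verification that the positivity and signature conditions are preserved in each branch, so as to confirm that the three listed forms indeed exhaust all orbits of the Borel-preserving action on the admissible matrices $M$ and correctly match the stated parameter constraints $\alpha\in\R^*$, $\beta>0$.
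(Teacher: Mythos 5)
Your setup takes a genuinely different and more elementary route than the paper: you work directly with the Gram matrix of $\prs$ in the basis $(e,f,h)$ and reduce it under the explicit action of the Borel stabilizer of $\h=\mathrm{span}\{e,h\}$, whereas the paper encodes $\prs$ as a self-adjoint operator $A$ with respect to the Killing form $B$ (via $B(Au,v)=\langle u,v\rangle$), observes that the Kundt condition amounts to $Ae=\alpha e$, and invokes O'Neill's normal forms for self-adjoint operators with an isotropic eigenvector before conjugating back. Your translation of the Kundt condition (Proposition \ref{dim}(ii) giving $\langle e,e\rangle=\langle e,h\rangle=0$, $\langle e,f\rangle\neq0$, with $\ad_e$ automatic), your identification of the stabilizer (which agrees with the paper's Maple-computed family $T$), and your transformation laws for $(a,b,c,d)$ are all correct.

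The gap is in the normalization step of the branch $d-2a\neq0$, $c=0$. By your own formulas the diagonal part acts by $b\mapsto b/t^4$, the involution fixes $b$, and the only unipotent transformation preserving $c=0$ when $d-2a\neq 0$ is the trivial one; hence only $|b|$ can be normalized, and $\mathrm{sign}(b)$ --- equivalently $\mathrm{sign}\bigl(\Delta/(d-2a)\bigr)$, a quotient of two of your invariants --- is an invariant of the orbit. The phrase ``rescaled by the diagonal to a unit (yielding the second form)'' therefore conceals a further orbit, represented by
\[
\left[\begin{array}{ccc}0&4\alpha&0\\ 4\alpha&-1&0\\ 0&0&8\beta\end{array}\right],\qquad \alpha\in\R^*,\ \beta>0,
\]
which is Lorentzian, satisfies all the Kundt conditions, and is not equivalent to any of the three listed forms because $a$, $d$ and the sign of $b$ at $c=0$ are all preserved by the stabilizer. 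So the ``systematic bookkeeping'' you defer to would not confirm that the three listed forms exhaust the orbits; it would refute it. (The same sign issue is latent in the paper's own argument: the Jordan-block normal form of a $B$-self-adjoint operator carries a sign $\epsilon=\pm1$ on its nilpotent part that cannot be removed once the isotropic eigenvector is pinned to $e$, and only $\epsilon=+1$ is treated.) A smaller point: in the third form the Lorentzian signature forces $d=8\alpha>0$, so there $\alpha>0$ rather than $\alpha\in\R^*$; your write-up should record this when matching parameter ranges.
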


	\begin{proof} Let $(\h,\prs)$ be a Kundt pair on $\mathrm{sl}(2,\R)$. According to Proposition \ref{class}, we can suppose that $\h=\mathrm{span}\{e,h\}$. A direct computation using the software Maple shows that the automorphisms of $\mathrm{sl}(2,\R)$ leaving $\h$ invariant are of the form
		\[ T=\left[ \begin {array}{ccc} a&-ab^2&-2\,ab\\ \noalign{\medskip}0&
			a^{-1}&0\\ \noalign{\medskip}0&b&1
		\end {array} \right],\; a,b\in\R. 
		 \]Denote by  $B$ the Killing form of $\mathrm{sl}(2,\R)$. Its matrix in the basis $(e,f,h)$ is given by
		 \[ M=\left[ \begin {array}{ccc} 0&4&0\\ \noalign{\medskip}4&0&0
		 \\ \noalign{\medskip}0&0&8\end {array} \right]. 
		 \] We consider the isomorphism $A$, symmetric with respect to $B$, 	and given by $B(Au,v)=\langle u,v\rangle$ for any $u,v\in \mathrm{sl}(2,\R)$. According to Proposition \ref{dim},  the pair $(\h,\prs)$ is Kundt if and only if $\langle e,e\rangle=\langle e,h\rangle =0$ and $\ad_e(\mathrm{sl}(2,\R))\subset\h$. This is equivalent to $Ae$ is a generator of the orthogonal of $\h$ with respect to $B$ which is equivalent to the existence of $\al\not=0$ such that $Ae=\al e$. The normal form of isomorphisms which are  symmetric  with respect to a Lorentzian scalar product are known. We give here the normal form of those having an isotropic eigenvector.  According to \cite{Oneil}, there exits a basis $\B=(f_1,f_2,f_3)$ of $\mathrm{sl}(2,\R)$ such that:
		 \begin{enumerate}\item[$(i)$] $Mat(A,\B)=\left[ \begin {array}{ccc} \beta&0&0\\ \noalign{\medskip}0&\alpha&0
		 	\\ \noalign{\medskip}0&0&\alpha\end {array} \right]
		 	$ and $Mat(B,\B)=\left[ \begin {array}{ccc} 1&0&0\\ \noalign{\medskip}0&0&1
		 	\\ \noalign{\medskip}0&1&0\end {array} \right]$,
		 	\item[$(ii)$] $Mat(A,\B)=\left[ \begin {array}{ccc} \beta&0&0\\ \noalign{\medskip}0&\alpha&1
		 	\\ \noalign{\medskip}0&0&\alpha\end {array} \right]$ and $Mat(B,\B)=\left[ \begin {array}{ccc} 1&0&0\\ \noalign{\medskip}0&0&1
		 	\\ \noalign{\medskip}0&1&0\end {array} \right],$
		 	\item[$(iii)$]  $Mat(A,\B)=\left[ \begin {array}{ccc} \alpha&1&0\\ \noalign{\medskip}0&\alpha&1
		 	\\ \noalign{\medskip}0&0&\alpha\end {array} \right]$ and  
		 	$Mat(B,\B)=\left[ \begin {array}{ccc} 0&0&1\\ \noalign{\medskip}0&1&0
		 	\\ \noalign{\medskip}1&0&0\end {array} \right]$.

		 	\end{enumerate}
		 Let $P$ be the passage matrix from $\B_0=(e,f,h)$ to $\B$.

		$\bullet$  The case $(i)$. The relation $A(e)=\al e$ implies that we can choose $f_2=e$ and the relation $P^tMP=Mat(B,\B)$ gives that $P=\left[ \begin {array}{ccc} -\sqrt {2}&1&-1\\ \noalign{\medskip}0&0&1/
		 4\\ \noalign{\medskip}1/4\,\sqrt {2}&0&1/2\end {array} \right]$. The matrix of $\prs$ in the basis $B_0$ is given by $Mat(A,\B_0)^tM$ and $Mat(A,\B^0)=PMat(A,\B)P^{-1}$. So we get
		 \[ Mat(\prs,\B_0)=\left[ \begin {array}{ccc} 0&4\,\alpha&0\\ \noalign{\medskip}4\,
		 \alpha&32\,\beta-32\,\alpha&-16\,\beta+16\,\alpha\\ \noalign{\medskip}0
		 &-16\,\beta+16\,\alpha&8\,\beta\end {array} \right].
		  \] Now the automorphism
		  \[ T_1=\left[ \begin {array}{ccc} 1&-4&-4\\ \noalign{\medskip}0&1&0
		  \\ \noalign{\medskip}0&2&1\end {array} \right]
		   \]satisfies
		   \[ T_1^tMat(\prs,\B_0)T_1=\left[ \begin {array}{ccc} 0&4\,\alpha&0\\ \noalign{\medskip}4\,
		   \alpha&0&0\\ \noalign{\medskip}0&0&8\,\beta\end {array} \right].
		    \]
		 $\bullet$  The case $(ii)$. We have
		 \[ P=\left[ \begin {array}{ccc} -\sqrt {2}&1&-1\\ \noalign{\medskip}0&0&1/
		 4\\ \noalign{\medskip}1/4\,\sqrt {2}&0&1/2\end {array} \right]\esp 
		 T_2=\left[ \begin {array}{ccc} 4&-1&-4\\ \noalign{\medskip}0&1/4&0
		 \\ \noalign{\medskip}0&1/2&1\end {array} \right] 
		  \]
		\[ T_2^tMat(\prs,\B_0)T_2=\left[ \begin {array}{ccc} 0&4\,\alpha&0\\ \noalign{\medskip}4\,
		\alpha&1&0\\ \noalign{\medskip}0&0&8\,\beta\end {array} \right].
		\]
		
		$\bullet$  The case $(iii)$. We have
		\[ P=\left[ \begin {array}{ccc} 1&-\sqrt {2}&-1\\ \noalign{\medskip}0&0&1/
		4\\ \noalign{\medskip}0&1/4\,\sqrt {2}&1/2\end {array} \right] 
		\esp T_3=\left[ \begin {array}{ccc} 4&-1&-4\\ \noalign{\medskip}0&1/4&0
		\\ \noalign{\medskip}0&1/2&1\end {array} \right] 
		\]
		  \[ T_3^tMat(\prs,\B_0)T_3=\left[ \begin {array}{ccc} 0&4\,\alpha&0\\ \noalign{\medskip}4\,
		  \alpha&0&2\,\sqrt {2}\\ \noalign{\medskip}0&2\,\sqrt {2}&8\,\alpha
		  \end {array} \right].		  
		  \]

		\end{proof}

\subsection{Kundt vs Locally Kundt Lie Groups}
\label{Kundt vs Locally Kundt Groups}

In fact, it turns out  from the previous proofs, we have shown that any 3-dimensional unimodular locally Kundt Lie group is  in fact a Kundt Lie group. 
This result is not  true in  general as the following example shows.

\begin{exem} Consider $\R^4$ endowed with the Lie algebra structure where the only non vanishing Lie bracket is given by $[e_1,e_2]=e_2$ and the Lorentzian scalar product given by
	\[ \prs=\left(\begin{array}{cccc}0&1&0&0\\1&0&0&0\\0&0&1&0\\0&0&0&1   \end{array}   \right). \]
	The Lie subalgebra $\h=\mathrm{span}\{e_1,e_3,e_4 \}$ is abelian and satisfies $\h^\perp=\R e_1$ and hence, according to Proposition \ref{kundt}, defines a local Kundt Lie group structure on the corresponding simply connected Lie group. However, $\ad_{e_1}(\R^4)\nsubset\h$ and hence, according to Proposition \ref{dim} this structure is not global.

	\end{exem}
%To end this paper, we believe that
%it is natural and interesting  investigate the question if a  locally Kundt Lie group is a (globally) Kundt spacetime? 

We think however, it is worthwhile to investigate the natural question: is a 
  locally Kundt Lie group, without being  a Kundt group,  still   a (globally) Kundt spacetime?


\begin{thebibliography}{00}
	
	
	
	
	
	

	
	\bibitem{chakkar} M. Boucetta, A. Chakkar, The moduli space of Lorentzian left invariant metrics on three dimensional unimodular simply connected Lie groups, To appear in The Korean Journal of Mathematics.
	
	
	
	
	
	
	
	
	
	
	
	
	
	%\ghani{ (see references within this article) }
	
	
		
	\bibitem{bm} M. Boucetta and  A. Medina,  { Solutions of the Yang-Baxter equations on quadratic Lie groups: the case of oscillator groups}, {\em J. Geom. Phys.} \textbf{61} (2011), no. 12, 2309-2320.
	
	\bibitem{BCH} J. Brannlund, A. Coley, S Hervik, 
	Supersymmetry, holonomy and Kundt spacetimes,
	Classical and Quantum Gravity, Volume 25, Number 19, 2008.
	
	
	
	
	
	
	
	
	
	
	
	
	
	 
	 
	 
	 \bibitem{calvaruso} G. Calvaruso and J. Van der Veken, Totally geodesic and parallel hypersurfaces of four dimensional oscillator groups, Results Math. 64 (2013), 135-153.
	 
	 \bibitem{CMPPPZ}
	 A. Coley, R. Milson, N. Pelavas, V. Pravda, A. Pravdov,  and R. Zalaletdinov,
	 Generalizations of pp-wave spacetimes in higher dimensions, 	Phys. Rev. D 67, 104020 -(2003).
	 
	 
	 \bibitem{CHP1}   A. Coley, S. Hervik, N. Pelavas
	 Lorentzian spacetimes with constant curvature invariants in three dimensions,
	 Class. Quant.Grav.  Vol 25, Number 02, 2008.
	 
	 \bibitem{Hervic}
	 
	 A. Coley, S Hervik, G Papadopoulos, and N Pelavas,
	 Kundt spacetimes, Classical
	 and Quantum Gravity, 26(10):105016. 
	 
	 \bibitem{CHP2} 
	 
	 A. Coley, S. Hervik, N. Pelavas, 
	 Spacetimes characterized by their scalar curvature invariants,
	 Class. Quant. Grav. Volume 26, Number 2, 2009. 
	 
	 
	 
	 \bibitem{duran} R. Duran Diaz, P.M. Gadea and J.A. Oubina, Reductive decompositions and Einstein-Yang-Mills equations associated to the oscillator group, J. Math. Phys. {\bf 40} (1999), 3490-3498.
	 
	 
	 
	 \bibitem{gadea} P.M. Gadea and J.A. Oubina, Homogeneous Lorentzian structures on the oscillator groups,
	 Arch. Math. {\bf 73} (1999), 311-320.
	 
	 \bibitem{HM}   
	 S. Hervik, D. McNutt,
	 Locally Homogeneous Kundt Triples and CSI Metrics,
	 Classical and Quantum Gravity, Volume 36, Number 18, 2019.
	 
	 
	 \bibitem{KS} B. Kruglikov, E. Schneider, 
	 Differential invariants of Kundt spacetimes,
	 Classical and Quantum Gravity, Volume 38, Number 19 (2021). 
	 
 
	 
	
	 \bibitem{levi} A.V. Levitchev, Chronogeometry of an electromagnetic wave given by a bi-invariant metric
	 on the oscillator group, Siberian Math. J. {\bf 27} (1986), 237-245.
	 
	   \bibitem{Melnick} K. Melnick, Dynamics on Lorentz manifolds, 
\href{https://www.math.umd.edu/~kmelnick/docs/papers/finalcours.pdf}{link}.
	 
	\bibitem{muller}  D. M\"uller and F. Ricci, On the Laplace-Beltrami operator on the oscillator group, J. Reine
	Angew. Math. {\bf 390} (1988), 193-207.
	
	\bibitem{Oneil} B. O'Neill, Semi-Riemannian Geometry, Academic Press, New York, 1983.
	
	
	
	\bibitem{OPP}  M. Ortaggio, V. Pravda, A. Pravdova, 
	Algebraic classification of higher dimensional spacetimes based on null alignment, 
	Classical and Quantum Gravity, Volume 30, Number 1 (2013).
	
	
	
	
	
	\bibitem{Scot} A. Scot, Dynamics on Lorentz manifolds, World Scientific Publishing Company; 1st edition (February 15, 2002).
	
	\bibitem{SKMHH}   
	H. Stephani, D Kramer, M. MacCallum, C. Hoenselaers, and E. Herlt. 
	Exact solutions of Einstein's field equations, Cambridge University Press, 2009.
	
	
	
	\bibitem{streater} R. F. Streater, 
	The representations of the oscillator group,
	Comm. Math. Phys. {\bf 4}, 217 (1967).
	
	
	
	\bibitem{SPH}   R. \v{S}varc, J. Podolsky, and O. Hru\v{s}ka, 
	Kundt spacetimes in the Einstein-Gauss-Bonnet theory,
	Phys. Rev. D 102, 084012   (2020).
	
	
	\bibitem{Zeg1}  A. Zeghib, 
	Geodesic foliations in Lorentz 3-manifolds, Comm. Math.Helv. 74 (1999), no. 1, 1-21.
	
	\bibitem{Zeg2}   
	A. Zeghib,
	Isometry groups and geodesic foliations of Lorentz manifolds. Part II: Geometry of analytic Lorentz manifolds with large isometry groups. 
	GAFA  9  (1999),  no. 4, 823 - 854.
	
	
\end{thebibliography}
\end{document}